\newtheorem{thm}{Theorem}[section] 
\newtheorem{lem}[thm]{Lemma} 
\newtheorem{prop}[thm]{Proposition}
\theoremstyle{definition}
\newtheorem*{ack}{Acknowledgments}
\newtheorem{defi}[thm]{Definition} 
\newtheorem{rem}[thm]{Remark}
\title[]{An upper bound on the degree of singular vectors for $E(1,6)$}
\author{Lucia Bagnoli}
\subjclass[2010]{08A05, 17B05 (primary), 17B65, 17B70 (secondary)}
\keywords{conformal superalgebras, annihilation superalgebras, finite Verma modules, singular vectors}
\address{Lucia Bagnoli, Dipartimento di matematica, Universit\`a di Bologna, Piazza di Porta San Donato 5, 40126 Bologna, Italy}
\email{luciabagnoli93@gmail.com}
\DeclareMathOperator{\Res}{Res}
\DeclareMathOperator{\Ind}{Ind}
\DeclareMathOperator{\Cur}{Cur}
\DeclareMathOperator{\Lie}{Lie}
\DeclareMathOperator{\Sing}{Sing}
\DeclareMathOperator{\End}{End}
\DeclareMathOperator{\I}{I}
\DeclareMathOperator{\sgn}{sgn}
\newcommand{\C}{\mathbb{C}}
\newcommand{\Z}{\mathbb{Z}}
\newcommand{\de}{\partial}
\newcommand{\so}{\mathfrak{so}}
\DeclareMathOperator{\spann}{span}
\newcommand{\inlinewedge}{\textrm{\raisebox{0.6mm}{\footnotesize $\bigwedge$}}}
\newcommand{\displaywedge}{\textrm{\raisebox{0.6mm}{\tiny $\bigwedge$}}}
\newcommand{\g}{\mathfrak {g}}
\DeclareMathOperator{\degr}{deg}
\newcommand*{\bigchi}{\mbox{\Large$\chi$}}
\begin{document}
\maketitle
\begin{abstract}
The aim of this work is to prove a technical result, that had been stated by Boyallian, Kac and Liberati \cite{ck6}, on the degree of singular vectors of finite Verma modules over the exceptional Lie superalgebra $E(1,6)$ that is isomorphic to the annihilation superalgebra associated with the conformal superalgebra $CK_{6}$. 
\end{abstract}
\section{Introduction}
Finite simple conformal superalgebras were completely classified in \cite{fattorikac} and consist in the list: $\Cur \mathfrak{g}$, where $\mathfrak{g}$ is a simple finite$-$dimensional Lie superalgebra, $W_{n} (n\geq 0)$, $S_{n,b}$, $\tilde{S}_{n}$ $(n\geq 2, \, b \in \mathbb{C})$, $K_{n} (n\geq 0, \, n \neq 4)$, $K'_{4}$, $CK_{6} $. The finite irreducible modules over the conformal superalgebras $\Cur \mathfrak{g}$, $K_{0}$, $K_{1}$ were studied in \cite{chengkac}. The classification of all finite irreducible modules over the conformal superalgebras $S_{2,0}$, $K_{N}$, for $N  =2,3,4$ was obtained in \cite{chenglam}. Boyallian, Kac, Liberati and Rudakov classified all finite irreducible modules over the conformal superalgebras of type $W$ and $S$ in \cite{bklr}; Boyallian, Kac and Liberati classified all finite irreducible modules over the conformal superalgebras of type $K_{N}$ for $N \geq 4$ in \cite{kac1}.  All finite irreducible modules over the conformal superalgebra $K'_{4}$ were classified in \cite{bagnoli}. Finally a classification of all finite irreducible modules over the conformal superalgebra $CK_{6}$ was obtained in \cite{ck6} and \cite{zm} with different approaches. \par
In \cite{ck6} the classification of all finite irreducible modules over the conformal superalgebra $CK_{6}$ is obtained by their correspondence with irreducible finite conformal modules over the annihilation superalgebra $\g:=\mathcal{A}(CK_{6})$ associated with $CK_{6}$. The annihilation superalgebra $\mathcal{A}(CK_{6})$ is isomorphic to the exceptional Lie superalgebra $E(1,6)$ (see \cite{new6},\cite{chengkac2},\cite{Shche},\cite{kac98}). In \cite{ck6}, in order to obtain this classification, the authors classify all highest weight singular vectors of finite Verma modules, i.e. induced modules $\Ind(F)=U(\g) \otimes _{U(\g_{\geq 0})} F$, where $F$ is a finite$-$dimensional irreducible $\g_{\geq 0}$-module \cite{kacrudakov,chenglam}. In \cite{ck6} the classification of highest weight singular vectors is based on a technical lemma, whose proof is missing (Lemma $4.4$  in \cite{ck6}), that provides an upper bound on the degree of singular vectors for $E(1,6)$. \par
The aim of this paper is to prove that technical lemma stated in \cite{ck6}. The proof of this lemma completes the classification of singular vectors for $E(1,6)$ given in \cite{ck6}. \par
The paper is organized as follows. In section 2 we recall some notions on conformal superalgebras. In section 3 we recall the definition of the conformal superalgebra $CK_6$ and some of its properties. Finally, in section 4 we prove the bound on the degree of singular vectors for $E(1,6)$.
\section{Preliminaries on conformal superalgebras}
We recall some notions on conformal superalgebras. For further details see \cite[Chapter 2]{kac1vertex}, \cite{dandrea}, \cite{bklr}, \cite{kac1}.\\
Let $\g$ be a Lie superalgebra; a formal distribution with coefficients in $\g$, or equivalently a $\g-$valued formal distribution, in the indeterminate $z$ is an expression of the following form:
\begin{align*}
a(z)=\sum_{n \in \Z}a_{n}z^{-n-1},  
\end{align*}
with $a_{n} \in \g$ for every $n \in \Z$. We denote the vector space of formal distributions with coefficients in $\g$ in the indeterminate $z$ by $\g[[z,z^{-1}]]$. We denote by $\Res(a(z))=a_{0}$ the coefficient of $z^{-1}$ of $a(z)$. The vector space $\g[[z,z^{-1}]]$ has a natural structure of $\C[\partial_{z}]-$module. We define for all $a(z) \in \g[[z,z^{-1}]]$ its derivative:
\begin{align*}
\partial_{z}a(z)=\sum_{n \in \Z}(-n-1)a_{n}z^{-n-2}.
\end{align*} 
A formal distribution with coefficients in $\g$ in the indeterminates $z$ and $w$ is an expression of the following form:
\begin{align*}
a(z,w)=\sum_{m,n \in \Z}a_{m,n}z^{-m-1}w^{-n-1},  
\end{align*}
with $a_{m,n} \in \g$ for every $m,n \in \Z$. We denote the vector space of formal distributions with coefficients in $\g$ in the indeterminates $z$ and $w$ by $\g[[z,z^{-1},w,w^{-1}]]$.
Given two formal distributions $a(z) \in \g[[z,z^{-1}]]$ and $b(w) \in \g[[w,w^{-1}]]$, we define the commutator $[a(z),b(w)]$:
\begin{align*}
[a(z),b(w)]=\bigg[\sum_{n \in \Z}a_{n}z^{-n-1} ,\sum_{m \in \Z}b_{m}w^{-m-1}  \bigg]=\sum_{m,n \in \Z}[a_{n},b_{m}] z^{-n-1}  w^{-m-1} .
\end{align*}
\begin{defi}
Two formal distributions $a(z),b(z) \in \g[[z,z^{-1}]]$ are called local if:
\begin{align*}
(z-w)^{N}[a(z),b(w)]=0 \ for \ some \ N \gg 0.
\end{align*}
\end{defi}
We call $\delta-$function the following formal distribution in the indeterminates $z$ and $w$:
\begin{align*}
\delta(z-w)=z^{-1}\sum_{n \in \Z} \left( \frac{w}{z} \right)^{n}.
\end{align*}
See Corollary $2.2$ in \cite{kac1vertex} for the following equivalent condition of locality.
\begin{prop}
Two formal distributions $a(z),b(z) \in \g[[z,z^{-1}]]$ are local if and only if $[a(z),b(w)]$ can be expressed as a finite sum of the form:
\begin{align*}
[a(z),b(w)]=\sum_{j}(a(w)_{(j)}b(w))\frac{\partial_{w}^{j}}{j!} \delta(z-w),
\end{align*}
where the coefficients $(a(w)_{(j)}b(w)):=\Res_{z}(z-w)^{j}[a(z),b(w)]$ are formal distributions in the indeterminate $w$.
\end{prop}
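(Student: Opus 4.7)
The plan is to prove both implications via two fundamental identities for the formal $\delta$-function, which I would establish first. From $(z-w)\delta(z-w)=0$, differentiating in $w$ and inducting on $j$ yields $(z-w)\frac{\partial_{w}^{j}}{j!}\delta(z-w)=\frac{\partial_{w}^{j-1}}{(j-1)!}\delta(z-w)$ for $j\geq 1$, which iterates to $(z-w)^{j+1}\frac{\partial_{w}^{j}}{j!}\delta(z-w)=0$. Combined with $\Res_{z}\delta(z-w)=1$, the same recursion yields the orthogonality relation $\Res_{z}(z-w)^{k}\frac{\partial_{w}^{j}}{j!}\delta(z-w)=\delta_{j,k}$.

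The ``if'' direction is then immediate: if $[a(z),b(w)]=\sum_{j=0}^{M} c^{j}(w)\frac{\partial_{w}^{j}}{j!}\delta(z-w)$ is a finite sum, multiplying by $(z-w)^{M+1}$ annihilates every summand, so $a(z)$ and $b(z)$ are local.

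For the converse, assume $(z-w)^{N}[a(z),b(w)]=0$. Define $c^{j}(w):=\Res_{z}(z-w)^{j}[a(z),b(w)]=a(w)_{(j)}b(w)$ for $j=0,\ldots,N-1$, and set
\[
D(z,w):=[a(z),b(w)]-\sum_{j=0}^{N-1}c^{j}(w)\frac{\partial_{w}^{j}}{j!}\delta(z-w).
\]
The two identities above force $(z-w)^{N}D(z,w)=0$ and $\Res_{z}(z-w)^{k}D(z,w)=0$ for every $0\leq k\leq N-1$. It thus suffices to prove that any formal distribution $D(z,w)\in\g[[z,z^{-1},w,w^{-1}]]$ satisfying these two conditions vanishes identically.

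The main step, and the main obstacle, is this uniqueness claim. Expanding $D(z,w)=\sum_{m,n\in\Z}D_{m,n}\,z^{-m-1}w^{-n-1}$, the residue conditions translate into $\sum_{i=0}^{k}(-1)^{i}\binom{k}{i}D_{k-i,n+i}=0$ for all $0\leq k\leq N-1$ and $n\in\Z$; induction on $k$ (the $i=0$ term is the unknown $D_{k,n}$, the remaining ones vanishing by the induction hypothesis) gives $D_{m,n}=0$ for $0\leq m\leq N-1$. The condition $(z-w)^{N}D(z,w)=0$ yields $\sum_{i=0}^{N}(-1)^{i}\binom{N}{i}D_{m_{0}+N-i,n_{0}+i}=0$ for all $m_{0},n_{0}\in\Z$. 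Taking $m_{0}=0,1,2,\ldots$ and inducting upward isolates the extremal index $i=0$, forcing $D_{m_{0}+N,n_{0}}=0$; taking $m_{0}=-1,-2,\ldots$ and inducting downward isolates $i=N$, forcing $D_{m_{0},n_{0}+N}=0$. Combining, $D_{m,n}=0$ for all $m,n\in\Z$, so $D\equiv 0$ and the proposition follows.
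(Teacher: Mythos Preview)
Your proof is correct. The paper itself does not prove this proposition but simply refers the reader to Corollary~2.2 in Kac's \textit{Vertex algebras for beginners}; your argument is essentially the standard one found there, built on the identities $(z-w)\,\frac{\partial_w^{j}}{j!}\delta(z-w)=\frac{\partial_w^{j-1}}{(j-1)!}\delta(z-w)$ and $\Res_{z}(z-w)^{k}\frac{\partial_w^{j}}{j!}\delta(z-w)=\delta_{j,k}$, followed by the uniqueness step for distributions annihilated by $(z-w)^{N}$ with vanishing first $N$ moments.
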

\begin{defi}[Formal Distribution Superalgebra]
Let $\g$ be a Lie superalgebra and $\mathcal{F}$ a family of mutually local $\g-$valued formal distributions in the indeterminate $z$. The pair $(\g,\mathcal{F})$ is called a \textit{formal distribution superalgebra} if the coefficients of all formal distributions in $\mathcal{F}$ span $\g$.
\end{defi}
We define the $\lambda-$bracket between two formal distributions $a(z),b(z) \in \g[[z,z^{-1}]] $ as the generating series of the $(a(z)_{(j)}b(z))$'s:
\begin{align}
\label{bracketformal}
[a(z)_{\lambda}b(z)]=\sum_{j \geq 0} \frac{\lambda^{j}}{j!}(a(z)_{(j)}b(z)).
\end{align}
\begin{defi}[Conformal superalgebra]
A \textit{conformal superalgebra} $R$ is a left $\Z_{2}-$graded $\C[\partial]-$module endowed with a $\C-$linear map, called $\lambda-$bracket, $R \otimes R \rightarrow \C[\lambda]\otimes R$, $a \otimes b \mapsto [a_{\lambda}b]$, that satisfies the following properties for all $a,b,c \in R$:
\begin{align*}
(i)&\,\, conformal \,\, sesquilinearity: &&[\partial a_{\lambda}b]=-\lambda [a_{\lambda}b], \quad  [a_{\lambda} \partial b]=(\lambda+\partial)[a_{\lambda}b]; \\
(ii)&\,\, skew-symmetry:             &&[a_{\lambda}b]=-(-1)^{p(a)p(b)}[b_{-\lambda-\partial}a] ;    \\
(iii)&\,\, Jacobi \,\, identity:           &&[a_{\lambda}[b_{\mu}c]]=[[a_{\lambda}b]_{\lambda+\mu}c]+(-1)^{p(a)p(b)}[b_{\mu}[a_{\lambda}c]];
\end{align*}
where $p(a)$ denotes the parity of the element $a \in R$ and $p(\partial a)=p(a)$ for all $a \in R$.
\end{defi}
We call $n-$products the coefficients $(a_{(n)}b)$ that appear in $[a_{\lambda}b]=\sum_{n\geq 0} \frac{\lambda^{n}}{n!}(a_{(n)}b)$  and give an equivalent definition of conformal superalgebra.
\begin{defi}[Conformal superalgebra]
\label{definizionesuperalgebraconforme}
A \textit{conformal superalgebra} $R$ is a left $\Z_{2}-$graded $\C[\partial]-$module endowed with a $\C-$bilinear product $(a_{(n)}b): R\otimes R\rightarrow R$, defined for every $n \geq 0$, that satisfies the following properties for all $a,b,c \in R$, $m,n \geq 0$:
\begin{itemize}
  \item[(i)] $p(\partial a)=p( a)$;
  \item[(ii)] $(a_{(n)}b)=0, \,\, for \,\, n \gg 0$;
	\item[(iii)] $({\partial a}_{(0)}b)=0$ and $({\partial a}_{(n+1)}b)=-(n+1) (a_{(n)}b)$;
	\item[(iv)] $(a_{(n)}b)=-(-1)^{p(a)p(b)}\sum_{j \geq 0}(-1)^{j+n} \frac{\partial^{j}}{j!}(b_{(n+j)}a)$;
	\item[(v)] $(a_{(m)}(b_{(n)}c))=\sum^{m}_{j=0}\binom{m}{j}((a_{(j)}b)_{(m+n-j)}c)+(-1)^{p(a)p(b)}(b_{(n)}(a_{(m)}c))$.
\end{itemize}
\end{defi}
Using (iii) and (iv) in Definition \ref{definizionesuperalgebraconforme} it is easy to show that for all $a,b \in R$, $n \geq 0$:
\begin{equation*}
(a_{(n)}\partial b)=\partial (a_{(n)} b)+n (a_{(n-1)}b).
\end{equation*}
Due to this relation and (iii) in Definition \ref{definizionesuperalgebraconforme}, the map $\partial: R\rightarrow R$, $a \mapsto \partial a$ is a derivation with respect to the $n-$products.
\begin{rem}
Let $(\g ,\mathcal{F})$ be a formal distribution superalgebra, endowed with $\lambda-$bracket (\ref{bracketformal}). The elements of $\mathcal{F}$ satisfy sesquilinearity, skew-symmetry and Jacobi identity with $\partial=\partial_{z}$; for a proof see Proposition 2.3 in \cite{kac1vertex}.
\end{rem}
We say that a conformal superalgebra $R$ is \textit{finite} if it is finitely generated as a $\C[\partial]-$module. 
An \textit{ideal} $I$ of $R$ is a $\C[\partial]-$submodule of $R$ such that $(a_{(n)}b)\in I$ for every $a \in R$, $b \in I$, $n \geq 0$. A conformal superalgebra $R$ is \textit{simple} if it has no non-trivial ideals and the $\lambda-$bracket is not identically zero. We denote by $R'$ the \textit{derived subalgebra} of $R$, i.e. the $\C-$span of all $n-$products.
\begin{defi}
A module $M$ over a conformal superalgebra $R$ is a left $\Z_{2}-$graded $\C[\partial]-$module endowed with $\C-$linear maps $R \rightarrow \End_{\C} M$, $a\mapsto a_{(n)}$, defined for every $n \geq 0$, that satisfy the following properties for all $a,b \in R$, $v \in M$, $m,n \geq 0$:
\begin{enumerate}
  \item[(i)] $a_{(n)}v=0 \,\, for \,\, n \gg 0$;
	\item[(ii)] $(\partial a)_{(n)}v=[\partial,a_{(n)}]v=-n a_{(n-1)}v $;
	\item[(iii)] $[a_{(m)},b_{(n)}]v=\sum_{j=0}^{m} \binom{m}{j}(a_{(j)}b)_{(m+n-j)}v$.
\end{enumerate}
\end{defi}
Given a module $M$ over a conformal superalgebra $R$, we define for all $a \in R$ and $v \in M$:
\begin{align*}
a_{\lambda}v=\sum_{n\geq 0} \frac{\lambda^{n}}{n!} a_{(n)}v.
\end{align*}
A module $M$ is called \textit{finite} if it is a finitely generated $\C[\partial]-$module.\\
We can construct a conformal superalgebra starting from a formal distribution superalgebra $(\g,\mathcal{F})$. Let $\mathcal{\overline{F}}$  be the closure of $\mathcal{F}$ under all the $n-$products, $\partial_{z}$ and linear combinations. By  Dong's Lemma, $\mathcal{\overline{F}}$ is still a family of mutually local distributions (see \cite{kac1vertex}). It turns out that $\mathcal{\overline{F}}$ is a conformal superalgebra. We will refer to it as the conformal superalgebra associated with $(\g,\mathcal{F})$.\\
Let us recall the construction of the annihilation superalgebra associated with a conformal superalgebra $R$.
Let $\widetilde{R}=R[y,y^{-1}]$, set $p(y)=0$ and $\widetilde{\partial}=\partial+\partial_{y}$. We define the following $n-$products on $\widetilde{R}$, for all $a,b \in R$, $f,g \in \C[y,y^{-1}]$, $n\geq 0$:
\begin{align*}
(af_{(n)}bg)=\sum_{j \in \Z_{+}}(a_{(n+j)}b) \Big(\frac{\partial_{y}^{j}}{j!}f \Big)g .
\end{align*}
In particular if $f=y^{m}$ and $g=y^{k}$ we have for all $n \geq 0$:
\begin{align*}
({ay^{m}}_{ (n)}by^{k})=\sum_{j \in \Z_{+}}\binom{m}{j}(a_{(n+j)}b)y^{m+k-j}.
\end{align*}
We observe that $\widetilde{\partial}\widetilde{R}$ is a two sided ideal of $\widetilde{R}$ with respect to the $0-$product. The quotient $\Lie R:=\widetilde{R}/ \widetilde{\partial}\widetilde{R}$ has a structure of Lie superalgebra with the bracket induced by the $0-$product, i.e. for all $a,b \in R$, $f,g \in \C[y,y^{-1}]$: 
\begin{align}
\label{bracketannihilation}
[af,bg]=\sum_{j \in \Z_{+}}(a_{( j)}b)\Big(\frac{\partial_{y}^{j}}{j!}f \Big)g .
\end{align}
\begin{defi}
The annihilation superalgebra $\mathcal{A}(R)$ of a conformal superalgebra $R$ is the subalgebra of $\Lie R$ spanned by all elements $ay^{n}$ with $n\geq 0$ and $a\in R$. \\
The extended annihilation superalgebra $\mathcal{A}(R)^{e}$ of a conformal superalgebra $R$ is the Lie superalgebra $\C \partial \ltimes \mathcal{A}(R)$. The semidirect sum $\C \partial \ltimes \mathcal{A}(R)$ is the vector space $\C \partial \oplus \mathcal{A}(R)$ endowed with the structure of Lie superalgebra determined by the bracket:
\begin{align*}
[\partial,ay^{m}]=-\partial_{y}(ay^{m})=-m a y^{m-1},
\end{align*}
for all $a \in R$ and the fact that $\C \partial$, $\mathcal{A}(R)$ are Lie subalgebras.
\end{defi}
For all $a \in R$ we consider the following formal power series in $\mathcal{A}(R)[[\lambda]]$:
\begin{align}
\label{powerseries}
a_{\lambda}=\sum_{n \geq 0}\frac{\lambda^{n}}{n!}ay^{n}.
\end{align}
For all $a,b \in R$, we have: $[a_{\lambda},b_{\mu}]=[a_{\lambda}b]_{\lambda+\mu}$ and $(\partial a)_{\lambda}=-\lambda a_{\lambda}$ (for a proof see \cite{cantacasellikac}).  
\begin{prop}[\cite{chengkac}]
\label{propcorrispmoduli}
Let $R$ be a conformal superalgebra.
	If $M$ is an $R$-module then $M$ has a natural structure of $\mathcal A(R)^e$-module, where the action of $ay^n$ on $M$ is uniquely determined by $a_\lambda v=\sum_{n \geq 0}\frac{\lambda^{n}}{n!}ay^{n}.v$ for all $v\in V$. Viceversa if $M$ is a $\mathcal A(R)^e$-module such that for all $a\in R$, $v\in M$ we have $ay^n.v=0$ for $n\gg0$, then $M$ is also an $R$-module by letting $a_\lambda v=\sum_{n}\frac{\lambda^{n}}{n!}ay^{n}.v$.
\end{prop}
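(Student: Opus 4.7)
The plan is to define the correspondence explicitly in both directions via the identification $ay^n \leftrightarrow a_{(n)}$, and then verify that the defining axioms on each side translate exactly into those on the other side.

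Starting from an $R$-module $M$, I would set $(ay^n).v := a_{(n)}v$ for all $a\in R$, $v\in M$, $n\geq 0$, and let $\partial \in \C\partial$ act as the given $\C[\partial]$-module map on $M$. The first technical step is to check that this prescription descends to the quotient $\widetilde{R}/\widetilde{\partial}\widetilde{R}$: the action of $\widetilde{\partial}(ay^m) = (\partial a)y^m + m\,ay^{m-1}$ on $v$ yields $(\partial a)_{(m)}v + m\,a_{(m-1)}v$, which vanishes by axiom (ii) of an $R$-module. The second step is to verify that the action respects the bracket (\ref{bracketannihilation}): for $f=y^m$, $g=y^k$, the right-hand side acting on $v$ becomes $\sum_{j=0}^{m}\binom{m}{j}(a_{(j)}b)_{(m+k-j)}v$, which equals $[a_{(m)},b_{(k)}]v$ by axiom (iii). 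Finally, the semidirect product relation $[\partial, ay^m] = -m\,ay^{m-1}$ holds on $M$ again by axiom (ii). After checking the uniqueness statement by comparing coefficients of $\lambda^n/n!$ in $a_\lambda v$, this direction is complete.

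For the reverse direction, given an $\mathcal A(R)^e$-module $M$ with $ay^n.v=0$ for $n\gg 0$, I would define $a_{(n)}v := ay^n.v$ and take the $\C[\partial]$-module structure on $M$ inherited from the embedded $\C\partial \subset \mathcal A(R)^e$. Axiom (i) is the locality hypothesis. For axiom (ii), the vanishing of $\widetilde{\partial}(ay^n)$ in $\mathcal A(R)$ gives the relation $(\partial a)y^n = -n\,ay^{n-1}$ in $\mathcal A(R)$, while the semidirect product bracket $[\partial, ay^n] = -n\,ay^{n-1}$ holds in $\mathcal A(R)^e$; applying both to $v$ yields the two required equalities. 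Axiom (iii) follows by specializing (\ref{bracketannihilation}) to monomials and acting on $v$. The two assignments are manifestly mutually inverse since both are encoded by the single identification $ay^n \leftrightarrow a_{(n)}$.

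The proof is essentially a matching of definitions and presents no deep obstacle: the axioms of an $R$-module were tailored to mirror the Lie superalgebra structure of $\mathcal A(R)^e$ under the correspondence above. The only genuine subtlety is the dual role of $\partial$, as the generator of the $\C\partial$ summand of the semidirect product and simultaneously as the $\C[\partial]$-module action on $M$; the reconciliation of these two roles is exactly what axiom (ii) encodes. Parity signs pass through transparently since both (\ref{bracketannihilation}) and the $n$-products are defined in a $\Z_2$-graded consistent way.
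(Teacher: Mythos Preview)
The paper does not supply its own proof of this proposition; it is quoted from \cite{chengkac} and stated without argument. Your proof is correct and is the standard one: the identification $ay^n\leftrightarrow a_{(n)}$ together with the observation that axioms (i)--(iii) of an $R$-module are precisely the relations needed for well-definedness on $\widetilde R/\widetilde\partial\widetilde R$, the Lie bracket \eqref{bracketannihilation}, and the semidirect product relation with $\partial$, respectively. There is nothing to compare against here, and your treatment of the one genuine subtlety---that axiom~(ii) simultaneously encodes both the quotient relation $(\partial a)y^n=-n\,ay^{n-1}$ in $\mathcal A(R)$ and the commutator $[\partial,ay^n]=-n\,ay^{n-1}$ in $\mathcal A(R)^e$---is exactly right.
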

Proposition \ref{propcorrispmoduli} reduces the study of modules over a conformal superalgebra $R$ to the study of a class of modules over its (extended) annihilation superalgebra.
The following proposition states that, under certain hypotheses, it is sufficient to consider the annihilation superalgebra. We recall that, given a $\Z-$graded Lie superalgebra $\g=\oplus_{i \in \Z}\g_{i}$, we say that $\g$ has finite depth $d\geq0$ if $\g_{-d}\neq 0$ and $\g_{i}=0$ for all $i<-d$.
\begin{prop}[\cite{kac1},\cite{chenglam}]
\label{keythmannihi}
Let $\g$ be the annihilation superalgebra of a conformal superalgebra $R$. Assume that $\g$ satisfies the following conditions:
\begin{description}
	\item[L1] $\g$ is $\Z-$graded with finite depth $d$;
	\item[L2] There exists an element whose centralizer in $\g$ is contained in $\g_{0}$;
	\item[L3] There exists an element $\Theta \in \g_{-d}$ such that $\g_{i-d}=[\Theta,\g_{i}]$, for all $i\geq 0$.
\end{description}
 Finite modules over $R$ are the same as modules $V$ over $\g$, called \textit{finite conformal}, that satisfy the following properties:
\begin{enumerate}
	\item for every $v \in V$, there exists $j_{0} \in \Z$, $j_{0}\geq -d$, such that $\g_{j}.v=0$ when $j\geq j_{0}$;
	\item $V$ is finitely generated as a $\C[\Theta]-$module.
\end{enumerate}
\end{prop}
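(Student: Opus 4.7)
The plan is to pivot on Proposition~\ref{propcorrispmoduli}, which already reduces modules over the conformal superalgebra $R$ to $\mathcal{A}(R)^{e}$-modules on which each $ay^{n}$ acts locally nilpotently in $n$ (for fixed $a$, $v$). What remains is to show that, under L1--L3, such $\mathcal{A}(R)^{e}$-modules that are finite over $\C[\partial]$ correspond exactly to $\g$-modules satisfying (1) and (2): the truncation condition (1) will encode the locality $ay^{n}.v=0$ for $n\gg 0$, and the $\C[\Theta]$-finite generation (2) will encode $\C[\partial]$-finite generation.

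For the direction from a finite $R$-module $M$ to conditions (1) and (2): the $\g$-action is just restriction. To prove (1), fix $v\in M$ and write $v$ as a polynomial in $\partial$ applied to finitely many $\C[\partial]$-generators $m_{1},\ldots,m_{s}$; by Proposition~\ref{propcorrispmoduli}, for each of the (finitely many, up to $\partial$) generators $a$ of $R$ and each $m_{i}$ there is a bound $N_{a,i}$ with $ay^{n}.m_{i}=0$ for $n\geq N_{a,i}$. L1 fixes a grading on $\g$ which, by definition of $\mathcal{A}(R)$ via (\ref{bracketannihilation}), identifies each $\g_{j}$ for $j\geq 0$ with $\C$-linear combinations of elements $ay^{n}$ whose $n$ is at least linear in $j$; collecting the finitely many $N_{a,i}$ and absorbing the finitely many $\partial$-shifts needed to pass from the $m_{i}$ to $v$ yields a uniform $j_{0}$ with $\g_{j}.v=0$ for $j\geq j_{0}$. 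For (2), the derivation rule $[\partial,ay^{m}]=-m\, ay^{m-1}$ and L3 together let one identify $-\Theta$ with $\partial$ up to elements of $\g_{\geq 0}$; using (1) such extra elements act by zero on any fixed generator after enough shifts, so a finite set of $\C[\partial]$-generators becomes a finite set of $\C[\Theta]$-generators.

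For the reverse direction, given a $\g$-module $V$ satisfying (1) and (2), the main task is to extend the $\g$-action to an $\mathcal{A}(R)^{e}$-action by defining the operator $\partial$ with $[\partial,ay^{m}]=-m\, ay^{m-1}$. The natural choice, dictated by the above identification, is $\partial := -\Theta$ (or a suitable scalar multiple) acting on $V$; L3 guarantees that this operator has the correct commutators on a set of generators of $\g$, because every element of $\g_{i-d}$ is of the form $[\Theta, y_{i}]$ for some $y_{i}\in\g_{i}$, so the proposed bracket relation with $\partial$ is automatic. L2 then pins down the extension uniquely: any two candidates for $\partial$ would differ by an operator commuting with the distinguished element of L2, hence with image in $\g_{0}$, and a further descent argument forces this difference to vanish. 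Condition (1) supplies the locality $ay^{n}.v=0$ for $n\gg 0$ (since $ay^{n}\in \g_{j(n)}$ with $j(n)\to\infty$), after which Proposition~\ref{propcorrispmoduli} provides the $R$-module structure; finite generation over $\C[\partial]$ follows from (2) by the same $\Theta\leftrightarrow-\partial$ identification.

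The main obstacle is this last identification: \emph{a priori} $\partial$ is an external derivation of $\g$, while $\Theta$ is an internal element, and there is no immediate reason to compare them. The entire content of the hypotheses L1--L3 is engineered to make this comparison work: L1 fixes a grading compatible with $y$-degree, L3 makes $\mathrm{ad}\,\Theta$ act as the correct downward shift on a generating set, and L2 enforces uniqueness. Once the identification of $\partial$ with (a scalar multiple of) $\Theta$ modulo $\g_{\geq 0}$ is in place, the equivalence of categories follows formally from Proposition~\ref{propcorrispmoduli}, and the two halves of the proof become symmetric.
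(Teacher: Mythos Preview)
The paper does not prove this proposition at all: it is quoted as a known result from \cite{kac1} and \cite{chenglam} and used as a black box, so there is no ``paper's own proof'' to compare your attempt against.

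That said, your sketch is in the right spirit but has genuine soft spots. First, the claim that for $j\geq 0$ the graded piece $\g_{j}$ is spanned by elements $ay^{n}$ with $n$ growing at least linearly in $j$ presupposes that the $\Z$-grading in L1 is tied to the $y$-degree; L1 as stated only asserts \emph{some} $\Z$-grading of finite depth, so this step needs either an additional hypothesis or an argument that any such grading is commensurable with the $y$-filtration. Second, and more seriously, the identification $\partial:=-\Theta$ is not justified by L3 alone: L3 says $[\Theta,\g_{i}]=\g_{i-d}$, i.e.\ $\mathrm{ad}\,\Theta$ is surjective onto lower pieces, but it does not say that $[\Theta,ay^{m}]=c\cdot m\,ay^{m-1}$ for a fixed scalar $c$ and all $a,m$. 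Your uniqueness argument via L2 is also incomplete: two candidate operators for $\partial$ differ by something centralizing the distinguished element, hence lying in $\g_{0}$ as an element of $\g$, but here the difference is an \emph{operator} on $V$, not an element of $\g$, so L2 does not directly apply. In the actual proofs in the cited references these points are handled by working with the specific gradings coming from a conformal weight (Virasoro) element and by a more careful bookkeeping of how $\partial$ and $\Theta$ interact on generators; your outline captures the shape of the argument but would not go through as written without filling these gaps.
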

\begin{rem}
\label{gradingelement}
We point out that condition \textbf{L2} is automatically satisfied when $\g$ contains a \textit{grading element}, i.e. an element $t \in \g$ such that $[t,b]=\degr (b) b$ for all $b \in \g$.
\end{rem}
Let $\g=\oplus_{i \in \Z} \g_{i}$ be a $\Z-$graded Lie superalgebra. We will use the notation $\g_{>0}=\oplus_{i>0}\g_{i}$, $\g_{<0}=\oplus_{i<0}\g_{i}$ and $\g_{\geq 0}=\oplus_{i\geq 0}\g_{i}$. We denote by $U(\g) $ the universal enveloping algebra of $\g$.
\begin{defi}
Let $F$ be a $\g_{\geq 0}-$module. The \textit{generalized Verma module} associated with $F$ is the $\g-$module $\Ind (F)$ defined by:
\begin{equation*}
\Ind (F):= \Ind ^{\g}_{\g_{\geq 0}} (F)=U(\g) \otimes _{U(\g_{\geq 0})} F.
\end{equation*}
\end{defi}
If $F$ is a finite$-$dimensional irreducible $\g_{\geq 0}-$module we will say that $\Ind(F)$ is a \textit{finite Verma module}. We will identify $\Ind (F)$ with $U(\g_{<0}) \otimes  F$ as vector spaces via the Poincar\'e$-$Birkhoff$-$Witt Theorem. The $\Z-$grading of $\g$ induces a $\Z-$grading on $U(\g_{<0})$ and $\Ind (F)$. We will invert the sign of the degree, so that we have a $\Z_{\geq 0}-$grading on $U(\g_{<0})$ and $\Ind (F)$. We will say that an element $v \in U(\g_{<0})_{k}$ is homogeneous of degree $k$. Analogously an element $m \in U(\g_{<0})_{k}  \otimes  F$ is homogeneous of degree $k$. For a proof of the following proposition see \cite{bagnoli}.
\begin{prop}
Let $\g=\oplus_{i \in \Z} \g_{i}$ be a $\Z-$graded Lie superalgebra. If $F$ is an irreducible finite$-$dimensional $\mathfrak{g}_{\geq 0}-$module, then $\Ind (F)$ has a unique maximal submodule. We denote by $\I (F)$ the quotient of $\Ind (F)$ by the unique maximal submodule.
\end{prop}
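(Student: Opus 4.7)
The plan is to exploit the $\Z_{\geq 0}$-grading $\Ind(F) = \bigoplus_{k\geq 0} U(\g_{<0})_{k} \otimes F$, in which $F = \Ind(F)_{0}$, and reduce everything to the irreducibility of $F$. The proof will rest on two claims: (a) every proper $\g$-submodule $M \subset \Ind(F)$ satisfies $M \cap F = 0$, and (b) every $\g$-submodule of $\Ind(F)$ is graded. Granting (a) and (b), the sum $N$ of all proper submodules is graded by (b), and its degree zero piece equals $\sum_{\alpha}(M_{\alpha}\cap F)=0$ by (a); hence $N \subsetneq \Ind(F)$, and by construction $N$ contains every proper submodule, so it is the unique maximal submodule.

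For (a), I will use that $F$ is a $\g_{\geq 0}$-submodule of $\Ind(F)$: for $x \in \g_{\geq 0}$ and $f\in F$, the identification $\Ind(F) = U(\g) \otimes_{U(\g_{\geq 0})} F$ gives $x\cdot(1\otimes f) = 1\otimes(x\cdot f) \in F$. Consequently, for any proper submodule $M$, the intersection $M \cap F$ is a $\g_{\geq 0}$-submodule of $F$; irreducibility forces $M \cap F \in \{0,F\}$, and the second case would imply $M \supseteq U(\g_{<0})\cdot F = \Ind(F)$, a contradiction.

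For (b), I will invoke a grading element $t\in\g_{0}$ in the sense of Remark \ref{gradingelement}. In the setting of interest $\g_{>0}$ acts nilpotently on the finite-dimensional irreducible $\g_{\geq 0}$-module $F$, hence trivially by an Engel-type argument, so $F$ is actually $\g_{0}$-irreducible. Since $t$ is central in $\g_{0}$, Schur's lemma yields a scalar $c$ with $t\cdot f = cf$. A direct commutation using $[t,u] = -k\, u$ for $u\in U(\g_{<0})_{k}$ then gives $t\cdot(u\otimes f) = (c-k)(u\otimes f)$, so $t$ acts on $\Ind(F)_{k}$ as $c-k$. These eigenvalues are pairwise distinct, so the graded pieces of $\Ind(F)$ are precisely the $t$-eigenspaces, and any $\g$-submodule — being $t$-stable — splits as the direct sum of its intersections with them.

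The main technical obstacle is step (b): the proposition is stated in full generality without an explicit grading element hypothesis, so one really relies on the specific $\g$ at hand admitting such a $t$. For the paper's application $\g = E(1,6)$ this is automatic, as is the triviality of the $\g_{>0}$-action on $F$, and the argument above then reduces the claim to a formal bookkeeping of graded components, along the lines of the sketch in \cite{bagnoli}.
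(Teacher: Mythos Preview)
The paper does not give its own proof of this proposition; it simply refers the reader to \cite{bagnoli}, so there is nothing here to compare your argument against directly. Your argument is the standard one and is correct once a grading element $t$ is available, and your caveat is well placed: without such a $t$ the statement is actually false in the generality in which it is phrased (for instance, if $\g=\g_{-2}\oplus\g_{-1}$ is two-dimensional abelian, then $\g_{\geq 0}=0$, $F=\C$, and $\Ind(\C)\cong\C[y,u]$ has infinitely many maximal submodules). One small streamlining of step~(b): you do not actually need Schur or an Engel-type argument to get $t$ acting as a scalar. It suffices that $t$ have a single \emph{generalized} eigenvalue on $F$---the generalized $t$-eigenspace for an eigenvalue of maximal real part is $\g_{\geq 0}$-stable (since $x\in\g_j$ shifts $t$-eigenvalues by $j\geq 0$) and hence equals $F$ by irreducibility---and then the $\Ind(F)_k$ are precisely the generalized $t$-eigenspaces for the pairwise distinct values $c-k$, which already forces every $t$-stable subspace to be graded.
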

\begin{defi}
Given a $\g-$module $V$, we call \textit{singular vectors} the elements of:
\begin{align*}
\Sing (V) =\left\{v \in V \,\, | \, \, \g_{>0}.v=0\right\}.
\end{align*}
Homogeneous components of singular vectors are still singular vectors so we often assume that singular vectors are homogeneous without loss of generality.
If $V=\Ind (F)$, for a $\g_{\geq 0}-$module $F$, we will call \textit{trivial singular vectors} the elements of $\Sing (V) $ of degree 0 and \textit{nontrivial singular vectors} the nonzero elements of $\Sing (V) $ of positive degree.
\end{defi}
\begin{thm}[\cite{kacrudakov},\cite{chenglam}]
\label{keythmsingular}
Let $\g$ be a Lie superalgebra that satisfies \textbf{L1}, \textbf{L2}, \textbf{L3}, then:
\begin{enumerate}
\item[(i)] if $F$ is an irreducible finite$-$dimensional $\mathfrak{g}_{\geq 0}-$module, then $\mathfrak{g}_{> 0}$ acts trivially on it;
	\item[(ii)] the map $F \mapsto \I (F)$ is a bijective map between irreducible finite$-$dimensional $\mathfrak{g}_{ 0}-$modules and irreducible finite conformal $\mathfrak{g}-$modules;
	\item[(iii)] the $\mathfrak{g}-$module $\Ind (F)$ is irreducible if and only if the $\mathfrak{g}_{0}-$module $F$ is irreducible and $\Ind (F)$ has no nontrivial singular vectors.
	\end{enumerate}
\end{thm}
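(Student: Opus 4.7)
The plan is to prove the three assertions in order, leveraging the graded structure of $\g$ together with the hypotheses \textbf{L1}--\textbf{L3}. The central organizing results are Proposition~\ref{propcorrispmoduli}, which turns $R$-modules into a distinguished class of $\mathcal{A}(R)^{e}$-modules, and Proposition~\ref{keythmannihi}, which characterizes finite conformal modules via its two conditions~(1) and~(2).

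For (i), consider the subspace $F^{\g_{>0}}=\{v\in F\mid \g_{>0}.v=0\}$. Since $\g_{>0}$ is an ideal of $\g_{\geq 0}$, this is a $\g_{\geq 0}$-submodule, so irreducibility of $F$ leaves only the alternatives $F^{\g_{>0}}=0$ and $F^{\g_{>0}}=F$. The task is therefore to establish nonvanishing, which follows from the observation that the image of $\g_{\geq 0}$ in $\End_{\C}(F)$ is a finite-dimensional graded Lie superalgebra; hence its positive-degree components vanish above some threshold, and a descending argument using $[\g_{k},\g_{\ell}]\subseteq\g_{k+\ell}$ locates a nonzero vector annihilated by every positive-degree element.

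For (ii), part~(i) identifies irreducible finite-dimensional $\g_{\geq 0}$-modules with irreducible finite-dimensional $\g_{0}$-modules by extending via the trivial action of $\g_{>0}$. Given such an $F$, I would check that $\Ind(F)$, and hence its irreducible quotient $\I(F)$, satisfies the two conditions of Proposition~\ref{keythmannihi}: condition~(1) is inherited from the $\Z_{\geq 0}$-grading on $\Ind(F)\cong U(\g_{<0})\otimes F$, while condition~(2) uses hypothesis \textbf{L3} to exhibit $\Ind(F)$ as generated by $F$ over $\C[\Theta]$. In the reverse direction, given an irreducible finite conformal $\g$-module $V$, one extracts $F$ as a suitably chosen top-degree subspace, which becomes an irreducible $\g_{0}$-module by (i); checking that the two constructions are mutually inverse yields the bijection.

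For (iii), $\Ind(F)$ is irreducible exactly when its unique maximal submodule is zero. If $F$ is not $\g_{0}$-irreducible, then any proper nonzero $\g_{0}$-submodule of $F$ generates a proper nonzero $\g$-submodule of $\Ind(F)$, so $\g_{0}$-irreducibility of $F$ is necessary. Conversely, assume $F$ is $\g_{0}$-irreducible and let $M\subseteq\Ind(F)$ be a nonzero proper graded submodule with lowest-degree component $M_{k}$; since $\g_{>0}$ strictly raises degree, every element of $M_{k}$ is a singular vector. The case $k=0$ is excluded because $M_{0}$ would then be a nonzero $\g_{0}$-submodule of $F$, equal to $F$ by irreducibility, forcing $M=\Ind(F)$. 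Hence proper submodules correspond exactly to nontrivial singular vectors, and the equivalence follows. The main technical obstacle throughout is verifying condition~(2) of Proposition~\ref{keythmannihi} for $\Ind(F)$, which rests squarely on \textbf{L3} and the PBW isomorphism $\Ind(F)\cong U(\g_{<0})\otimes F$.
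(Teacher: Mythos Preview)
The paper does not prove this theorem; it is quoted from \cite{kacrudakov} and \cite{chenglam} as a known result, so there is no argument to compare against. Your outline can, however, be judged on its own.

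The overall structure is right, and (ii) and (iii) are reasonable sketches, but your argument for (i) has a genuine gap. You assert that the image of $\g_{\geq 0}$ in $\End_{\C}(F)$ is a finite-dimensional \emph{graded} Lie superalgebra and conclude that its positive-degree pieces vanish above some threshold. Neither step is justified: the kernel of the representation $\rho\colon\g_{\geq 0}\to\End_{\C}(F)$ has no reason to be a graded ideal, so the image need not inherit a $\Z$-grading; and finite-dimensionality of $\End_{\C}(F)$ does not prevent the subspaces $\rho(\g_{k})$ from all coinciding with a single nonzero subspace for every $k\geq 1$. What actually forces $F^{\g_{>0}}\neq 0$ is hypothesis \textbf{L2}, in practice via the grading element $t\in\g_{0}$ of Remark~\ref{gradingelement}: $t$ acts on the finite-dimensional space $F$ with finitely many generalized eigenvalues, and each $x\in\g_{k}$ with $k>0$ sends the generalized eigenspace $F_{\mu}$ into $F_{\mu+k}$; choosing $\mu$ so that $\mu+k$ is not an eigenvalue for any $k\geq 1$ yields $0\neq F_{\mu}\subseteq F^{\g_{>0}}$. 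The same device is what legitimizes, in your argument for (iii), the passage from an arbitrary nonzero proper submodule $M\subseteq\Ind(F)$ to a \emph{graded} one so that a lowest-degree component exists: you assumed $M$ graded without comment, and that too rests on the action of a grading element. With these two repairs your plan goes through.
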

\section{The conformal superalgebra $CK_{6}$}
In this section we recall the definition and some properties of the conformal superalgebra $CK_{6}$ from \cite{ck6}.
Let $\inlinewedge(N)$ be the Grassmann superalgebra in the $N$ odd indeterminates $\xi_{1},...,\xi_{N}$. Let $t$ be an even indeterminate and $\inlinewedge (1,N)=\C[t,t^{-1}] \otimes \inlinewedge(N)$. We consider the Lie superalgebra of derivations of $\inlinewedge (1,N)$:
\begin{equation*}
W(1,N)=\bigg\{ D=a \partial_{t}+\sum ^{N}_{i=1} a_{i} \partial_{i} \,\, | \,\, a,a_{i} \in \displaywedge (1,N)\bigg\},
\end{equation*}
where $\partial_{t}=\frac{\partial}{\partial{t}}$ and $\partial_{i} =\frac{\partial}{\partial{\xi_{i}}}$ for every $i \in \left\{1,...,N \right\}$.\\
Let us consider the contact form $\omega = dt-\sum_{i=1}^{N}\xi_{i} d\xi_{i} $. The contact Lie superalgebra $K(1,N)$ is defined by:
\begin{equation*}
K(1,N)=\left\{D \in W(1,N) \,\, | \,\, D\omega=f_{D}\omega \,\, for \,\, some \,\, f_{D} \in \displaywedge (1,N)\right\}.
\end{equation*}
Analogously, let $\inlinewedge  (1,N)_{+}=\C[t] \otimes \inlinewedge (N)$. We define the Lie superalgebra $W(1,N)_{+}$ (resp. $K(1,N)_{+}$) similarly to $W(1,N)$ (resp. $K(1,N)$) using $\inlinewedge  (1,N)_{+}$ instead of $\inlinewedge  (1,N)$.
We can define on $\inlinewedge (1,N)$ a Lie superalgebra structure as follows. For all $f,g \in \inlinewedge(1,N)$ we let:
\begin{equation}
\label{bracketlie}
[f,g]=\Big(2f-\sum_{i=1}^{N} \xi_{i}  \partial_{i} f \Big)(\partial_{t}{g})-(\partial_{t}{f})\Big(2g-\sum_{i=1}^{N} \xi_{i} \partial_{i} g\Big)+(-1)^{p(f)}\Big(\sum_{i=1}^{N} \partial_{i}  f \partial_{i}  g \Big).
\end{equation}
We recall that $K(1,N) \cong \inlinewedge(1,N)$ as Lie superalgebras via the following map (see \cite{{chengkac2}}):
\begin{gather*}
\displaywedge(1,N) \longrightarrow  K(1,N) \\
f \longmapsto 2f \partial_{t}+(-1)^{p(f)} \sum_{i=1}^{N} (\xi_{i} \partial_{t} f+ \partial_{i}f )(\xi_{i} \partial_{t} + \partial_{i}).
\end{gather*}
We will always identify elements of $K(1,N) $ with elements of $\inlinewedge(1,N)$ and we will omit the symbol $\wedge$ between the $\xi_{i}$'s. We consider on $K(1,N)$ the standard grading, i.e. for every $t^{m} \xi_{i_{1}} \cdots \xi_{i_{s}} \in K(1,N)$ we have $\degr(t^{m} \xi_{i_{1}} \cdots \xi_{i_{s}})=2m+s-2$.\\
We consider the following family of formal distributions:
\begin{equation*}
\mathcal{F}=\bigg\{A(z):= \sum_{m \in \Z}(At^{m})z^{-m-1}=A \delta(t-z), \,\, \forall    A \in \displaywedge(N)\bigg\}. 
\end{equation*}
The pair ($K(1,N)$,$\mathcal{F}$) is a formal distribution superalgebra and the conformal superalgebra $\mathcal{\overline{F}}$ can be identified with $K_{N}:=\C[\partial] \otimes \inlinewedge(N)$ (for a proof see \cite{bagnoli}). We will refer to it as the conformal superalgebra of type $K$.\\
On $K_{N}$ the $\lambda-$bracket for $f,g \in \inlinewedge(N)$, $f= \xi_{i_{1}} \cdots \xi_{i_{r}}$ and $g= \xi_{j_{1}} \cdots \xi_{j_{s}}$, is given by (see \cite{kac1},\cite{fattorikac}):
\begin{align*}
[f_{\lambda}g]=(r-2)\partial(fg)+(-1)^{r}\sum^{N}_{i=1}(\partial_{i}f)(\partial_{i}g)+\lambda(r+s-4)fg.
\end{align*}
The associated annihilation superalgebra is (see \cite{kac1},\cite{fattorikac}):
\begin{align*}
\mathcal{A}(K_{N})=K(1,N)_{+}.
\end{align*}
We adopt the following notation: we denote by $\mathcal I$ the set of finite sequences of elements in $\{1,\ldots,N\}$; we will write $I=i_1\cdots i_r$ instead of $I=(i_1,\ldots,i_r)$. Given $I=i_1\cdots i_r$ and $J=j_1\cdots j_s$, we will denote $i_1\cdots i_rj_1\cdots j_s$ by $IJ$; if $I=i_1 \cdots i_r\in \mathcal I$  we let $\xi_{I}=\xi_{i_1}\cdots \xi_{i_r}$ and $|\xi_{I}|=|I|=r$. We denote by $\mathcal{I}_{\neq}$ the subset of $\mathcal{I}$ of sequences with distinct entries and by $\mathcal{I}_{<}$ the subset of $\mathcal{I}_{\neq}$ of increasingly ordered sequences. We focus on $N=6$. We let $\xi_{*}=\xi_{123456}$.

Following \cite{ck6}, for $\xi_{I} \in \inlinewedge(6)$ we define the \textit{modified Hodge dual} $\xi^{*}_{I}$ to be the unique monomial such that $\xi_{I}\xi^{*}_{I}=\xi_{*}$. We extend the definition of modified Hodge dual to elements $\sum_{k,I} \alpha_{k,I}t^{k}\xi_{I}\in \inlinewedge(1,6)_{+}$ letting $(\sum_{k,I} \alpha_{k,I}t^{k}\xi_{I})^{*}=\sum_{k,I} \alpha_{k,I}t^{k}\xi_{I}^{*}$.\\
The conformal superalgebra $CK_{6}$ is the subalgebra of $K_{6}$ defined by (see construction in \cite{new6}):
\begin{align*}
CK_{6}=\C[\partial]-\spann \left\{\xi_{L}-i(-1)^{\frac{|L|(|L|+1)}{2}}(-\partial)^{3-|L|}\xi_{L}^{*}: \, L \in \mathcal{I}_{\neq}, 0 \leq |L| \leq 3   \right\}.
\end{align*}
We introduce the linear operator $A: K(1,6)_{+}  \longrightarrow K(1,6)_{+}$:
\begin{align*}
A(t^{k}\xi_{L})=(-1)^{\frac{|L|(|L|+1)}{2}}\left(\frac{d}{dt}\right)^{3-|L|}(t^{k}\xi_{L})^{*},
\end{align*}
where $\left(\frac{d}{dt}\right)^{-1}$ indicates integration with respect to $t$ (i.e. it sends $t^{k}$ to $t^{k+1}/(k+1)$) and $A$ is extended by linearity (cf. Remark 5.3.2 in \cite{chengkac2}).
The annihilation superalgebra associated with $CK_{6}$, that we will denote by $\g$, is the subalgebra of $K(1,6)_{+}$ given by the image of $Id-iA$; it is isomorphic to the exceptional Lie superalgebra $E(1,6)$ (see \cite{chengkac2},\cite{new6},\cite{Shche},\cite{kac98}). The bracket on $\g$ is given by \eqref{bracketlie}.
\begin{rem}
\label{costruzioneannihi}
We point out that, $\g$ is in bijective correspondence with the span of elements $(Id-iA)(t^{k}\xi_{L})$ with  $L \in \mathcal{I}_{\neq}$, $|L| \leq 3$, $k\geq 0$.
 Indeed for $L \in \mathcal{I}_{\neq}$, with $|L| > 3$:
\begin{align*}
(Id-iA)(t^{k}\xi_{L})=(Id-iA)\Big(-i(-1)^{\frac{|L|(|L|+1)}{2}}\frac{t^{k+|L|-3}}{k(k+1) \cdots (k+|L|-3)}\xi_{L}^{*}\Big).
\end{align*}
\end{rem}
The map $A$ preserves the $\Z-$grading, then $\g$ inherits the $\Z-$grading. The homogeneous components of non$-$positive degree of $\g$ and $K(1,6)_{+}$ coincide and are:
\begin{align*}
&\g_{-2}=\langle 1 \rangle ,\\
&\g_{-1}=\langle \xi_{1},\xi_{2},\xi_{3},\xi_{4},\xi_{5},\xi_{6} \rangle , \\
&\g_{0}=\langle t,\xi_{ij}: \,\, 1\leq i,j\leq 6 \rangle .
\end{align*}
The annihilation superalgebra $\g$ satisfies \textbf{L1}, \textbf{L2}, \textbf{L3}: \textbf{L1} is straightforward; \textbf{L2} follows by Remark \ref{gradingelement} since $t$ is a grading element for $\g$; \textbf{L3} follows from the choice $\Theta:=-1/2 \in \g_{-2}$.
Let us focus on $\g_{0}=\langle t, \xi_{ij}: \quad 1 \leq i<j \leq 6 \rangle \cong \C t \oplus \mathfrak{so}(6) $, where $\mathfrak{so}(6)$ is the Lie algebra of $6 \times 6$ skew$-$symmetric matrices and $\xi_{ij}\in \g_{0} $ corresponds to $E_{j,i}-E_{i,j} \in \mathfrak{so}(6)$. 
 We recall the following notation from \cite{ck6}. We choose as basis of a Cartan subalgebra $\mathfrak{h}$ of $ \mathfrak{so}(6)$ the elements:
\begin{align*}
H_{1}=-i\xi_{12}, \, H_{2}=-i\xi_{34}, \, H_{3}=-i\xi_{56}.
\end{align*}
Let $\varepsilon_{j}  \in \mathfrak{h}^{*}$ such that $\varepsilon_{j}(H_{k})=\delta_{j,k}$.
The roots are $\Delta=\left\{\pm\varepsilon_{l}\pm\varepsilon_{j}:\, 1 \leq l<j \leq 3 \right\}$, the positive roots are $\Delta^{+}=\left\{\varepsilon_{l}\pm\varepsilon_{j}:\, 1 \leq l<j \leq 3 \right\}$ and the simple roots are $\Pi=\{\varepsilon_{1}-\varepsilon_{2}, \varepsilon_{2}-\varepsilon_{3}, \varepsilon_{2}+\varepsilon_{3} \}$.
The root decomposition is $\mathfrak{so}(6)=\mathfrak{h} \oplus \left(\oplus_{\alpha \in \Delta} \g_{\alpha}\right)$, where $\g_{\alpha}=\C E_{\alpha}$ and the $E_{\alpha}$'s are, for $1 \leq l<j \leq 3$:
\begin{align*}
E_{\varepsilon_{l}-\varepsilon_{j}}&=-\xi_{2l-1, 2j-1}-\xi_{2l, 2j}-i\xi_{2l-1,2j}+i\xi_{2l, 2j-1},\\
E_{\varepsilon_{l}+\varepsilon_{j}}&=-\xi_{2l-1, 2j-1}+\xi_{2l, 2j}+i\xi_{2l-1,2j}+i\xi_{2l, 2j-1},\\
E_{-(\varepsilon_{l}-\varepsilon_{j})}&=-\xi_{2l-1, 2j-1}-\xi_{2l, 2j}+i\xi_{2l-1,2j}-i\xi_{2l, 2j-1},\\
E_{-(\varepsilon_{l}+\varepsilon_{j})}&=-\xi_{2l-1, 2j-1}+\xi_{2l, 2j}-i\xi_{2l-1,2j}-i\xi_{2l, 2j-1}.
\end{align*}
We denote by $N_{\mathfrak{so}_{6}}$ the nilpotent subalgebra $\oplus_{\alpha \in \Delta^{+}} \g_{\alpha}$.\\
We introduce the following notation. Given a proposition $P$, we let
\begin{equation*}
\bigchi_{P}=
\begin{cases}
1 \quad \text{if $P$ is true,}\\
0 \quad \text{if $P$ is false.}
\end{cases}
\end{equation*}
From now on $F$ will be a finite$-$dimensional irreducible $\g_{0}-$module, such that $\g_{>0}$ acts trivially on it. We point out that $\Ind(F) \cong \C[\Theta] \otimes \inlinewedge(6)\otimes F$. Indeed, let us denote by $\eta_{i}$ the image in $U(\g)$ of $\xi_{i} \in  \inlinewedge(6) $, for all $i \in \left\{1,2,3,4,5,6\right\}$.
 In $U(\g)$ we have that $\eta_{i}^{2}=\Theta$, for all $i \in \left\{1,2,3,4,5,6\right\}$: since $[\xi_{i},\xi_{i}]=-1$ in $\g$, we have $\eta_{i}\eta_{i}=-\eta_{i}\eta_{i}-1$ in $U(\g)$.
We will make the following abuse of notation: if $I,J\in \mathcal I_{\neq}$ we will denote by $I \cap J$ the increasingly ordered sequence whose elements are the elements of the intersection of the underlying sets of $I$ and $J$. Given $I=i_{1}, \cdots i_{k}\in \mathcal I_{\neq}$, we will use the notation  $\eta_{I}$ to denote the element  $\eta_{i_{1}}  \cdots \eta_{i_{k}} \in  U(\g_{<0})$ and we will denote  $|\eta_{I}|=|I|=k$. We will denote $\eta_{*}=\eta_{123456}$. 
Given $I,J \in \mathcal{I}_{\neq}$, we define:
\begin{align*}
 \xi_{I}  \star \eta_{J}=\bigchi_{I\cap J=\emptyset}\eta_{I}\eta_{J},\\
 \eta_{J}  \star \xi_{I}=\bigchi_{I\cap J=\emptyset}\eta_{J}\eta_{I}.
\end{align*}
We will also use the following notation: if $i_1\cdots i_k \in \mathcal I_{\neq}$ and $i\in \{1,2,3,4,5,6\}$ we let 
\[
\de_i \eta_{i_1,\ldots,i_k}=\begin{cases}
(-1)^{j+1} \eta_{i_1,\ldots,\hat{i_j},\ldots,i_k}& \textrm{if $i=i_j$ for some $j$}\\
0& \textrm{otherwise.}
\end{cases}
\]
and for $a \in \C$, $I=(i_{1} ,\, i_{2}, \cdots i_{k}),J\in \mathcal I_{\neq}$: 
\begin{align*}
\partial_{I}\eta_{J}&=\partial_{i_{1}}\partial_{i_{2}}\dots \partial_{i_{k}}\eta_{J} \ \ &\partial_{I}\xi_{J}&=\partial_{i_{1}}\partial_{i_{2}}\dots \partial_{i_{k}}\xi_{J} ;\\
\partial_{a \xi_{I}}\eta_{J}&=a \partial_{I}\eta_{J} \ \ &\partial_{a \xi_{I}}\xi_{J}&=a \partial_{I}\xi_{J};\\
\partial_{\emptyset}\eta_{S}&=\eta_{S} \ \ &\partial_{\emptyset}\xi_{S}&=\xi_{S}.
\end{align*}
We extend the definition of modified Hodge dual to the elements of $U(\g_{<0})$ in the following way: for $\eta_{I} \in U(\g_{<0})$, we let $\eta_{I}^{*}$ to be the unique monomial such that $\xi_{I}\star  \eta_{I}^{*}=\eta_{*}$. \\
Moreover we define the Hodge dual of elements of $\inlinewedge(6)$ (resp. $U(\g_{<0})$) in the following way: for $\xi_{I} \in \inlinewedge(6)$ (resp. $\eta_{I} \in U(\g_{<0})$), we let $\overline{\xi_{I}}$ (resp. $\overline{\eta_{I}}$) to be the unique monomial such that $\overline{\xi_{I}} \xi_{I} =\xi_{*}$ (resp. $\overline{\eta_{I}}\star \xi_{I} =\eta_{*}$). Then we extend by linearity the definition of Hodge dual to elements $\sum_{I}\alpha_{I}\xi_{I} $ (resp. $\sum_{I}\alpha_{I}\eta_{I} $) and we set $\overline{t^{k}\xi_{I}}=t^{k}\overline{\xi_{I}}$ (resp. $\overline{\Theta^{k}\eta_{I}}=\Theta^{k}\overline{\eta_{I}}$). We point out that for $\eta_{I} \in U(\g_{<0})$, $\overline{\eta_{I}}=(-1)^{|I|}\eta_{I}^{*}$.\\
In order to study singular vectors, it is important to find an explicit form for the action of $\g$ on $\Ind (F)$ using the $\lambda-$action notation \eqref{powerseries}. Due to the fact that the homogeneous components of non$-$positive degree of $E(1, 6)$ are the same as those of $K(1, 6)_{+}$, the $\lambda-$action is given by restricting the $\lambda-$action for $K(1, 6)_{+}$:
\begin{align*}
{\xi_L\,}_{\lambda}(g \otimes v)= \sum_{j \geq 0} \frac{\lambda^{j}}{j!}t^{j}\xi_L .(g \otimes v),
\end{align*}
for $L\in \mathcal I$, $g \otimes v \in \Ind(F)$, described explicitly in Theorem 4.1 in \cite{kac1}.
We recall the following result proved in \cite[Theorem 4.3]{kac1} for the $\lambda-$action in the case of $K(1, 6)_{+}$.
\begin{prop}[\cite{kac1}]
\label{actiondualck6}
Let $T$ be the vector spaces isomorphism $T:\Ind(F)\rightarrow \Ind(F)$, $g \otimes v\mapsto \overline{g}\otimes v$, for all $g \otimes v\in \Ind(F)\cong \C[\Theta] \otimes \inlinewedge(6)\otimes F$. Let $L,I \in \mathcal{I}_{\neq}$. Then
\begin{align*}
T &\circ {\xi_{L}}_{\lambda}\circ T^{-1}(\eta_{I}\otimes v) \\
=&(-1)^{\frac{|L|(|L|+1)}{2}+|L||I|} \left\{ (|L|-2) \Theta (\xi_{L} \star \eta_{I}) \otimes v -(-1)^{|L|} \sum^{6}_{i=1}(\partial_{i}\xi_{L}\star \partial_{i}\eta_{I}) \otimes v -\sum_{r<s}  (\partial_{rs}\xi_{L}\star \eta_{I} )\otimes \xi_{sr}.v  \right.\\
& \left.+\lambda \Big((\xi_{L}\star \eta_{I} ) \otimes t.v-(-1)^{|L|}\sum^{6}_{i=1} \partial_{i}(\xi_{Li} \star \eta_{I} ) \otimes v+ (-1)^{|L|} \sum _{i \neq j} (\partial_{i}\xi_{Lj} \star \eta_{I} )\otimes \xi_{ji}. v\Big) \right. \\
& \left.- \lambda^{2}  \sum _{i < j} (\xi_{Lij} \star \eta_{I}) \otimes   \xi_{ji}. v \right\}.
\end{align*}
\end{prop}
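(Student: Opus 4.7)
The plan is to derive the formula by transporting the $\lambda$-action formula for $K(1,6)_+$ given in Theorem $4.3$ of \cite{kac1} through the vector-space isomorphism $T$. Since $\g=E(1,6)$ and $K(1,6)_+$ agree in non-positive degrees, ${\xi_L}_\lambda$ acts on $\Ind(F)\cong\C[\Theta]\otimes\inlinewedge(6)\otimes F$ by the same formula as in \cite{kac1}, and to compute $T\circ {\xi_L}_\lambda\circ T^{-1}(\eta_I\otimes v)$ one first applies $T^{-1}$ to obtain $\overline{\eta_I}\otimes v$, then applies the known formula term by term, and finally applies $T$ to each resulting monomial. Since $T$ fixes the $\C[\Theta]$ factor and sends each Grassmann monomial to its Hodge dual, the entire calculation reduces to comparing each Grassmann operation appearing in \cite[Theorem 4.3]{kac1}, namely multiplication by $\xi_L$, partial differentiation $\partial_i$, and the induced $\so(6)$-action, with its counterpart under Hodge duality.

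The computational heart consists of a small collection of bookkeeping identities relating $\eta$-products, $\star$-products, and Hodge duals. For disjoint $I,J \in \mathcal{I}_{\neq}$ one expresses $\overline{\xi_I \xi_J}$ in terms of $\xi_I \star \overline{\xi_J}$ up to an explicit sign, and dually $\overline{\partial_i g}$ becomes a $\star$-multiplication by $\eta_i$ on $\overline{g}$. Together with the identity $\overline{\eta_I}=(-1)^{|I|}\eta_I^*$ established earlier, these produce the global prefactor $(-1)^{\frac{|L|(|L|+1)}{2}+|L||I|}$: the first summand is the reversal sign inherent to the modified Hodge dual of $\xi_L$ (traced back to the defining property $\xi_L\star\eta_L^*=\eta_*$), while the second is the Koszul sign of commuting an element of parity $|L|$ past one of parity $|I|$.

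With these identities in hand, each of the five summands in the proposition corresponds to exactly one summand of the $K(1,6)_+$-action: the $(|L|-2)\Theta(\xi_L\star\eta_I)\otimes v$ term matches the $(r-2)\partial(fg)$ piece of \eqref{bracketlie}, the $\sum_i\partial_i\xi_L\star\partial_i\eta_I$ sum matches the Grassmann-derivative piece $(-1)^r\sum_i \partial_i f\,\partial_i g$, the $\sum_{r<s}\partial_{rs}\xi_L\star\eta_I\otimes\xi_{sr}.v$ term matches the induced $\so(6)$-action on $F$, and the three $\lambda$-linear terms together with the $\lambda^2$ term record the $t$-translate of $\xi_L$ and the second-order jet of the contact form. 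The main obstacle is precisely the sign bookkeeping: at every step one must consistently track the reversal signs from the Hodge dual, the parity changes produced by each $\partial_i$ or $\partial_{rs}$, and the Koszul signs from moving odd elements past $v$ or past one another. Individually each of these is elementary, but they must combine coherently across all five summands; the $\star$ operation and the operators $\partial_I$ introduced in the paragraph preceding the proposition are designed precisely to absorb these signs into compact notation, reducing the proof to a careful but routine matching of terms.
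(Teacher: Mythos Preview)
The paper does not give its own proof of this proposition: it is simply recalled verbatim as \cite[Theorem~4.3]{kac1}, with the surrounding text stating ``We recall the following result proved in \cite[Theorem~4.3]{kac1} for the $\lambda$-action in the case of $K(1,6)_{+}$.'' There is therefore no argument in the paper to compare your proposal against. Your outline---conjugate the known $K(1,6)_+$ $\lambda$-action by the Hodge-dual isomorphism $T$ and track the reversal and Koszul signs through the $\star$ and $\partial_I$ notation---is exactly the computation that underlies the cited result, so your approach is appropriate and consistent with how the paper uses the proposition.
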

The following lemma allows to compute ${\xi_{L}} _{\lambda}\big(\Theta^{k}\xi_{I} \otimes v\big)$.
 \begin{lem}
\label{lambda+thetack6}
Let $L,I \in \mathcal{I}_{\neq}$ and $k \geq 0$. The following holds:
\begin{align*}
{\xi_{L}}_{\lambda}\big(\Theta^{k}\xi_{I} \otimes v\big)=(\Theta+\lambda)^{k}({\xi_{L}} _{\lambda} \xi_{I}\otimes v).
\end{align*}
\end{lem}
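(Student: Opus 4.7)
The plan is to prove the identity by induction on $k$, with the case $k=0$ tautological. The inductive step reduces entirely to the single operator identity
\[
[{\xi_L}_\lambda,\, \Theta] = \lambda\, {\xi_L}_\lambda
\]
on $\Ind(F)$, equivalently ${\xi_L}_\lambda(\Theta w) = (\Theta+\lambda)({\xi_L}_\lambda w)$ for every $w \in \Ind(F)$. Granted this, writing $\Theta^{k}\xi_{I}\otimes v = \Theta\cdot(\Theta^{k-1}\xi_{I}\otimes v)$, applying the relation once, and invoking the inductive hypothesis on the remaining $\Theta^{k-1}$ factor immediately yields $(\Theta+\lambda)^{k}({\xi_{L}}_{\lambda}\xi_{I}\otimes v)$.

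The heart of the argument is therefore the commutation relation, which I would prove by a direct bracket computation. Expanding ${\xi_{L}}_{\lambda} = \sum_{j \geq 0} \frac{\lambda^{j}}{j!}\, t^{j}\xi_{L}$ via \eqref{powerseries} and recalling that $\Theta = -1/2 \in \g_{-2}$, one has
\[
[{\xi_{L}}_{\lambda},\, \Theta] = \sum_{j \geq 0} \frac{\lambda^{j}}{j!}\, [t^{j}\xi_{L},\, \Theta]
\]
as operators on $\Ind(F)$. The key calculation is $[t^{j}\xi_{L},\, 1] = -2j\, t^{j-1}\xi_{L}$, obtained by applying \eqref{bracketlie} with $f = t^{j}\xi_{L}$ and $g = 1$: since $\partial_{t}1 = 0$ and $\partial_{i}1 = 0$ for every $i$, only the middle summand $-(\partial_{t}f)\cdot 2g$ survives. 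Multiplying by $-1/2$ gives $[t^{j}\xi_{L},\, \Theta] = j\, t^{j-1}\xi_{L}$, and a standard shift of the summation index then produces exactly $\lambda\, {\xi_{L}}_{\lambda}$.

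A couple of minor points should be checked to make the computation legitimate. First, $\Theta$ is central in $\g_{<0}$ (which follows from $[1, \xi_{i}] = 0$ in \eqref{bracketlie}), so multiplication by $\Theta$ on $\Ind(F) \cong \C[\Theta] \otimes \inlinewedge(6) \otimes F$ coincides unambiguously with the action of the element $\Theta \in \g_{-2}$. Second, as explained just before the lemma, the $\lambda$-action of $\xi_{L}$ on $\Ind(F)$ is obtained by restricting the $\lambda$-action of $K(1,6)_{+}$, so the Lie bracket $[t^{j}\xi_{L},\, \Theta]$ computed via \eqref{bracketlie} does transfer to the commutator of the corresponding operators on $\Ind(F)$. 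I do not expect any serious obstacle: once these identifications are in place, the whole proof collapses to the one-line bracket computation above, and the induction is routine.
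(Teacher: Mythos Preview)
Your proposal is correct: the induction reduces to the commutator identity $[{\xi_L}_\lambda,\Theta]=\lambda\,{\xi_L}_\lambda$, and your bracket computation $[t^{j}\xi_L,1]=-2j\,t^{j-1}\xi_L$ via \eqref{bracketlie} is exactly right, as is the index shift afterwards. The paper does not spell out a proof here---it simply refers to the analogous Lemma~5.11 in \cite{bagnoli}---so what you have written is in fact a fleshed-out version of the argument the paper only points to, and is the standard way to establish this kind of identity.
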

\begin{proof}
The proof is analogous to Lemma 5.11 in \cite{bagnoli}.
\end{proof}
Let $\vec{m}$ be a vector of the $E(1,6)-$module $\Ind(F)$. From \cite{ck6} we know that $\vec{m}$ is a highest weight singular vector if and only if:
\begin{description}
  \item[S0] $N_{\so_{6}}.\vec{m}=0$.
	\item[S1] For all $L \in \mathcal{I}_{\neq}$, with $0 \leq |L| \leq 3$:
	$$\frac{d^{2}}{d \lambda^{2}} \left({\xi_{L}}_{\lambda}\vec{m}-i(-1)^{\frac{|L|(|L|+1)}{2}}\lambda^{3-|L|}\left({\xi_{L}^{*}}_{\lambda}\vec{m}\right)\right)=0.$$
	\item[S2] For all $L \in \mathcal{I}_{\neq}$, with $1 \leq |L| \leq 3$:
	$$\frac{d}{d \lambda} \left({\xi_{L} }_{\lambda}\vec{m}-i(-1)^{\frac{|L|(|L|+1)}{2}}\lambda^{3-|L|}\left({\xi_{L}^{*}}_{\lambda}\vec{m}\right)\right)_{| \lambda=0}=0.$$
	\item[S3]For all $L \in \mathcal{I}_{\neq}$, with $|L| = 3$:
	$$\left({\xi_{L} }_{\lambda}\vec{m}-i(-1)^{\frac{|L|(|L|+1)}{2}}\lambda^{3-|L|}\left({\xi_{L}^{*}}_{\lambda}\vec{m}\right)\right)_{| \lambda=0}=0.$$
\end{description}
In particular condition \textbf{S0} is equivalent to impose that $\vec{m}$ is a highest weight vector; conditions \textbf{S1}--\textbf{S3} are equivalent to impose that $\vec{m}$ is a singular vector. Indeed condition \textbf{S1} is equivalent to
	\begin{align*}
	\sum_{j\geq 2}j(j-1)\frac{\lambda^{j-2}}{j!}(t^{j}\xi_L)\vec{m}-i(-1)^{\frac{|L|(|L|+1)}{2}}\sum_{j\geq 2}(3-|L|+j)(2-|L|+j)\frac{\lambda^{1-|L|+j}}{j!}(t^{j}\xi_L^{*})\vec{m}=0 ,
	\end{align*}
	which implies $(t^{j}\xi_{L}-i(-1)^{\frac{|L|(|L|+1)}{2}}\left(\frac{d}{dt}\right)^{3-|L|}t^{j}\xi_{L}^{*})\vec{m}=0$ for all $L\in \mathcal I_{\neq}$, with $0\leq |L|\leq 3$ and $j\geq 2$.\\
	Condition \textbf{S2} is equivalent to $(t\xi_{L}-i(-1)^{\frac{|L|(|L|+1)}{2}}\left(\frac{d}{dt}\right)^{3-|L|}t\xi_{L}^{*})\vec{m}=0 $ for all for all $L\in \mathcal I_{\neq}$, with $ 1 \leq |L| \leq 3$.\\
	Condition \textbf{S3} is equivalent to $(\xi_{L}-i\xi_{L}^{*}) \vec{m}=0 $ for all $L\in \mathcal I_{\neq}$ such that $ |L| =3$.
	Therefore, by Remark \ref{costruzioneannihi}, \textbf{S1}--\textbf{S3} are equivalent to impose that $\vec{m}$ is a singular vector.
\begin{rem}	
\label{appoggiotecnicos1s2s3ck6}
We point out that, by the previous conditions, a vector $\vec{m} \in \Ind (F)$ is a highest weight singular vector if and only if it satisfies  \textbf{S0}--\textbf{S3}. Since $T$, defined as in Proposition \ref{actiondualck6}, is an isomorphism, the fact that $\vec{m} \in \Ind (F)$ satisfies \textbf{S0}--\textbf{S3} is equivalent to impose \textbf{S0}--\textbf{S3} for $(T \circ ({\xi_{L}} _{\lambda}-i(-1)^{\frac{|L|(|L|+1)}{2}}\lambda^{3-|L|}{\xi_{L}^{*}}_{\lambda}) \circ T^{-1})T(\vec{m})$, using the expression given by Proposition \ref{actiondualck6}.\\
Therefore in the following results we will consider a vector $T(\vec{m}) \in \Ind (F)$ and we will impose that the expression for $(T \circ ({\xi_{L}} _{\lambda}-i(-1)^{\frac{|L|(|L|+1)}{2}}\lambda^{3-|L|}{\xi_{L}^{*}}_{\lambda}) \circ T^{-1})T(\vec{m})=(T \circ ({\xi_{L}} _{\lambda}-i(-1)^{\frac{|L|(|L|+1)}{2}}\lambda^{3-|L|}{\xi_{L}^{*}}_{\lambda}) ) \vec{m}$ given by Proposition \ref{actiondualck6} satisfies conditions \textbf{S0}--\textbf{S3}. We will have that $\vec{m}$ is a highest weight singular vector.
\end{rem}	
Motivated by Remark \ref{appoggiotecnicos1s2s3ck6}, we consider a singular vector $\vec{m} \in \Ind (F)$ such that:
\begin{align}
\label{singck6}
T(\vec{m}) =\sum_{k=0} ^{N}\Theta^{k} \sum_{I \in \mathcal{I}_{<}}\eta_{I} \otimes v_{I,k}.
\end{align}
We will denote $ v_{123456,k}= v_{*,k}$ for all $k$.
\section{Main result}
In \cite{ck6} the following Lemma is stated without proof (Lemma $4.4$ in \cite{ck6}). In particular, this Lemma is used in \cite{ck6} to completely classify the highest weight singular vectors of finite Verma modules over $\g$.
\begin{lem}
\label{lemck6grado}
Let $\vec{m}\in \Ind(F)$ be a singular vector, such that $T(\vec{m})$ is written as in \eqref{singck6}. Then the degree of $\vec{m}$ with respect to $\Theta$ is at most 2.
Moreover, $T(\vec{m})$ has the following form:
\begin{align*}
T(\vec{m}) = \Theta^{2} \sum_{|I|\geq 5}\eta_{I} \otimes v_{I,2}+\Theta \sum_{|I|\geq 3}\eta_{I} \otimes v_{I,1}+ \sum_{|I|\geq 1}\eta_{I} \otimes v_{I,0}.
\end{align*}
\end{lem}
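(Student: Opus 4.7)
Write $T(\vec m)=\sum_{k=0}^{N}\Theta^{k}M_{k}$ with $M_{k}=\sum_{I\in \mathcal I_{<}}\eta_{I}\otimes v_{I,k}$, where $N$ denotes the highest $\Theta$-degree occurring. By Remark \ref{appoggiotecnicos1s2s3ck6}, the conditions \textbf{S1}--\textbf{S3} translate, via Proposition \ref{actiondualck6} and Lemma \ref{lambda+thetack6}, into polynomial identities in $\Theta$ and $\lambda$ for the expression
\begin{align*}
T\circ\Bigl({\xi_{L}}_{\lambda}-i(-1)^{\frac{|L|(|L|+1)}{2}}\lambda^{3-|L|}{\xi_{L^{*}}}_{\lambda}\Bigr)\circ T^{-1}\,T(\vec m).
\end{align*}
Schematically, the $\lambda$-action of $\xi_{L}$ on $\Theta^{k}\eta_{I}\otimes v$ splits as
$(\Theta+\lambda)^{k}\bigl[\Theta A_{L}+B_{L}+\lambda C_{L}+\lambda^{2}D_{L}\bigr](\eta_{I}\otimes v)$,
so that ${\xi_{L}}_{\lambda}T(\vec m)$ has $(\Theta,\lambda)$-bidegree at most $(N+1,\,N+2)$. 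Crucially, the top $\Theta^{N+1}$-component comes only from $\Theta A_{L}(M_{N})$, and the top $\lambda^{N+2}$-component comes only from $\lambda^{2}D_{L}(M_{N})$, where $A_{L}$ and $D_{L}$ are explicit one-line operators involving the star products $\xi_{L}\star\eta_{I}$ and $\xi_{Lij}\star\eta_{I}$ respectively.

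\textbf{Degree bound $N\leq 2$.} For $L\in \mathcal I_{\neq}$ with $|L|=3$ one has $\lambda^{3-|L|}=1$, and conditions \textbf{S1}--\textbf{S3} combine to say that ${\xi_{L}}_{\lambda}\vec m-i\,{\xi_{L^{*}}}_{\lambda}\vec m=0$ identically in $\lambda$. Identifying the coefficients of $\Theta^{N+1}$ and of $\lambda^{N+2}$ in this polynomial identity yields
\begin{align*}
A_{L}(M_{N})=iA_{L^{*}}(M_{N}),\qquad D_{L}(M_{N})=iD_{L^{*}}(M_{N})
\end{align*}
for every $3$-element sequence $L$. Since $\xi_{L}\star\eta_{I}\neq 0$ only when $L\cap I=\emptyset$ and $|L|+|I|\le 6$, running $L$ over all $3$-subsets of $\{1,\ldots,6\}$ (together with the Hodge duals $L^{*}$, also of size $3$) produces enough linear relations among the $v_{I,N}$ to force $M_{N}=0$ whenever $N\geq 3$, contradicting the maximality of $N$.

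\textbf{Shape constraints.} With $N\leq 2$ in hand, the claimed vanishing $v_{I,k}=0$ for $|I|<2k+1$ should be extracted by the same method applied to all $L$ with $0\leq|L|\leq 3$. For a candidate nonzero $v_{I,k}$ with $|I|\leq 2k$, I would choose an $L$ disjoint from $I$ so that $\xi_{L}\star\eta_{I}=\eta_{LI}$; since $(L,I)\mapsto LI$ is injective on disjoint pairs, the coefficient of $\Theta^{k}\eta_{LI}$ in the appropriate singular-vector equation isolates $v_{I,k}$ up to its Hodge-dual partner. Combining \textbf{S1}--\textbf{S2} for $|L|=0,1,2$ (which kill low-order $\lambda$-coefficients) with the $|L|=3$ identity, and proceeding by downward induction on $k$ starting from $k=N\leq 2$, the resulting triangular system cuts out exactly the bound $|I|\geq 2k+1$.

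\textbf{Main obstacle.} The difficulty is not a single computation but the combinatorial bookkeeping. Proposition \ref{actiondualck6} has four contributions per $L$, each with its own sign $(-1)^{|L|(|L|+1)/2+|L||I|}$ and its own $\mathfrak{so}_{6}$-action on $F$, while each of \textbf{S1}--\textbf{S3} couples $L$ with its dual $L^{*}$ of possibly different cardinality. Separating these contributions by $(\Theta,\lambda)$-bidegree while choosing $L$ so that each coefficient $v_{I,k}$ can be isolated is the central challenge. An additional subtlety is that the $B_{L}$-term at level $k$ couples with $\Theta A_{L}$ coming from level $k-1$ (after the expansion of $(\Theta+\lambda)^{k-1}$), so the resulting system is triangular rather than diagonal and must be unravelled by induction from the top $\Theta$-degree downward.
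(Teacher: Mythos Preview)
Your argument for the degree bound $N\le 2$ has a genuine gap. You correctly observe that for $|L|=3$ the three conditions combine to the identity ${\xi_L}_\lambda\vec m=i\,{\xi_{L^*}}_\lambda\vec m$, and that the top $\Theta$-coefficient yields $A_L(M_N)=iA_{L^*}(M_N)$. But this equation is \emph{independent of $N$}: it is simply the $(\lambda+\Theta)^{N+1}$-coefficient of an identity that holds for any top index, so if it forced $M_N=0$ it would do so for every $N$, contradicting the existence of nontrivial singular vectors. Concretely, for $|L|=3$ the operator $A_L$ sends $\eta_I\otimes v$ to a multiple of $(\xi_L\star\eta_I)\otimes v$, which vanishes unless $I\subseteq L^*$, hence $|I|\le 3$; likewise $D_L$ involves $\xi_{Lij}\star\eta_I$ with $|Lij|=5$, so it only sees $|I|\le 1$. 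Thus the pair $A_L,\,D_L$ cannot touch $v_{I,N}$ for $|I|\ge 4$, and even for $|I|=3$ you only obtain the duality relation $v_{L^*,N}=\pm i\,v_{L,N}$ rather than vanishing. No amount of running $L$ over $3$-subsets changes this.

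The $N$-dependence that actually drives the bound enters through \textbf{S1} for $|L|<3$, where the prefactor $\lambda^{3-|L|}$ and the second derivative $d^2/d\lambda^2$ bring in the binomial factors $k(k-1)$ from $(\lambda+\Theta)^k$. The paper exploits this in two stages: first, \textbf{S1} for $|L|=1$ (expanded in the basis $\lambda^a(\lambda+\Theta)^s$) yields, after successive eliminations, the relation $\sum_I(-1)^{1+|I|}\bigl((\xi_1\star\eta_I)\otimes v_{I,s+1}-\partial_1\eta_I\otimes v_{I,s+2}\bigr)=0$ for $s\ge 3$, which kills all $v_{I,k}$ with $k\ge 5$ and all but $v_{*,4}$, giving $N\le 4$. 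Second, to descend from $4$ to $2$, the paper first uses \textbf{S2}/\textbf{S3} with $|L|=1,3$ to obtain two different eigenvalue equations for $t$ on the same vector $v_{\emptyset,p}$ (e.g.\ $t.v_{\emptyset,1}=6v_{\emptyset,1}$ and $t.v_{\emptyset,1}=4v_{\emptyset,1}$), forcing $v_{\emptyset,p}=0$ and $v_{I,1}=0$ for $|I|\le 2$; these vanishings are then fed into the system coming from \textbf{S1} with $|L|=0$ (seven coupled equations in $a_p,b_p,B_p,C_p,ad_p,bd_p,Bd_p,Cd_p$) to conclude $a_4(\emptyset)=a_3(\emptyset)=0$, i.e.\ $v_{I,4}=v_{I,3}=0$ for all $I$. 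Your sketch does not access this mechanism, and the ``triangular system'' you describe for the shape constraints has the same defect: without the $t$-eigenvalue trick and the $|L|=0$ equations you cannot isolate the needed coefficients.
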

The rest of this section is the dedicated to the proof of Lemma \ref{lemck6grado}.
\begin{lem}
\label{lemmagrado4}
A singular vector $\vec{m}\in \Ind(F)$, such that $T(\vec{m})$ is written as in \eqref{singck6}, has degree at most 4 with respect to $\Theta$.
\end{lem}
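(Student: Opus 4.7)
I argue by contradiction: suppose $\vec{m}$ is a nontrivial singular vector with top $\Theta$-power $N\ge 5$ and derive $m_N=0$. Since singular vectors split into homogeneous components in the natural $\Z_{\ge 0}$-grading on $\Ind(F)$, I may assume $\vec{m}$ is homogeneous of some degree $D$; and because the Hodge dual $T$ sends $\eta_I$ to $\eta_{\overline I}$ with $|\overline I|=6-|I|$, each $m_k$ in $T(\vec{m})=\sum_k\Theta^k m_k$ is then supported on sequences $J$ of a single length $|J|_k=2k+6-D$.

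The first step is to analyze the $\Theta^{N+1}$-coefficient of the singular-vector equations. By Lemma \ref{lambda+thetack6} one can pull out $(\Theta+\lambda)^k$ and write
\[
T({\xi_L}_\lambda\vec{m})=\sum_{k}(\Theta+\lambda)^{k}\bigl(A_L^{0}(m_k)+\lambda A_L^{1}(m_k)+\lambda^{2} A_L^{2}(m_k)\bigr),
\]
where $A_L^j$ are the $\lambda$-degree-$j$ parts of Proposition \ref{actiondualck6}; only $A_L^{0}$ contains an explicit factor of $\Theta$, namely the summand $(|L|-2)\Theta\,\phi_L$ with $\phi_L(\eta_I\otimes v):=(-1)^{|L|(|L|+1)/2+|L||I|}(\xi_L\star\eta_I)\otimes v$. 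Consequently the $\Theta^{N+1}$-part of the combined expression
\[
E_L(\lambda):=T\bigl(({\xi_L}_\lambda-i(-1)^{|L|(|L|+1)/2}\lambda^{3-|L|}{\xi_L^{*}}_\lambda)\vec{m}\bigr)
\]
is concentrated at $\lambda^{0}$ (from ${\xi_L}_\lambda$) and $\lambda^{3-|L|}$ (from ${\xi_L^{*}}_\lambda$). Applying in turn \textbf{S1} for $|L|=0$ (kills $\lambda^{3}$, giving $v_{\emptyset,N}=0$), \textbf{S1} for $|L|=1$ (at $\lambda^{2}$, giving $v_{\{l\},N}=0$), \textbf{S2} for $|L|=2$ (at $\lambda^{1}$, giving $v_{\{a,b\},N}=0$), and \textbf{S3} for $|L|=3$ at $\lambda^{0}$ (using the previous vanishings this reduces to a relation of the form $v_{\{1,\ldots,6\}\setminus L,\,N}+iv_{L,N}=0$ up to sign; applied once to $L$ and once to its complement it forces $v_{L,N}=0$) then yields $v_{I,N}=0$ for every $|I|\le 3$. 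Combined with homogeneity this forces $|J|_N\in\{4,5,6\}$, and therefore $D\in\{2N,2N+1,2N+2\}$.

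For $N\ge 5$ the three remaining cases each leave $m_N$ of very restricted shape together with two or three nonvanishing lower levels (two in the cases $|J|_N=4,5$ and three in the case $|J|_N=6$), each of fixed $|J|$ by homogeneity. The last step of the plan is to rule them out via \textbf{S1} for $|L|=0$ at higher $\lambda^{p}$-coefficients ($p\ge 2$). Since $\xi_{*}\star\eta_J=0$ whenever $|J|\ge 1$, the contribution of $T({\xi_{*}}_\lambda\vec{m})$ is very sparse, and an explicit expansion of
\[
E_\emptyset(\lambda)=\sum_{k}(\Theta+\lambda)^{k}\bigl(-2\Theta\,m_k+\lambda B_\emptyset(m_k)+\lambda^{2}C_\emptyset(m_k)\bigr)-i\lambda^{3}T({\xi_{*}}_\lambda\vec{m})
\]
together with the vanishing of each $\lambda^{p}$-coefficient for $p\ge 2$ produces, at the highest available $\Theta$-power, relations of the form $\alpha_p(N)\,m_N+(\text{lower-}\Theta\text{ terms})=0$, where $\alpha_p$ is an explicit polynomial in $N$ arising from binomial coefficients in $(\Theta+\lambda)^{N}$ and from the eigenvalue shift $6-|J|_N$ produced by $B_\emptyset$. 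Verifying $\alpha_p(N)\ne 0$ for some $p$ in each of the three cases when $N\ge 5$ then forces $m_N=0$, the desired contradiction.

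The main obstacle is precisely this last combinatorial step. No single sub-argument is conceptually difficult, but one must carefully track the simultaneous action of $B_\emptyset$, $C_\emptyset$ and $T({\xi_{*}}_\lambda\cdot)$ on all the nonvanishing levels $m_{N-j}$ and verify that in each of the three cases the resulting linear system on the top components has $m_N=0$ as its only solution.
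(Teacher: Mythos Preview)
Your outline contains a genuine error in the first step and an acknowledged gap in the last step, and the overall route differs substantially from the paper's.

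\medskip

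\textbf{The error at $|L|=3$.} Your claim that applying \textbf{S3} once to $L$ and once to its complement forces $v_{L,N}=0$ is incorrect: for $|L|=3$ the two conditions are literally proportional. Indeed, writing $\xi_L^{*}=\epsilon_L\,\xi_{L^c}$ with $\epsilon_L\in\{\pm1\}$ determined by $\xi_L\xi_L^{*}=\xi_{*}$, one checks $\epsilon_{L^c}=-\epsilon_L$ and hence
\[
\xi_{L^c}-i\,\xi_{L^c}^{*}=\xi_{L^c}+i\epsilon_L\xi_L=i\epsilon_L\bigl(\xi_L-i\epsilon_L\xi_{L^c}\bigr)=i\epsilon_L\bigl(\xi_L-i\,\xi_L^{*}\bigr).
\]
Thus \textbf{S3} for $L$ and for $L^c$ impose the \emph{same} equation, and at the $\Theta^{N+1}$ level you only obtain the single relation $v_{L^c,N}=-i\epsilon_L v_{L,N}$, not $v_{L,N}=0$. (There is no rescue from \textbf{S1} or \textbf{S2} at $|L|=2,3$ either: for those choices the entire $\Theta^{N+1}$-part of $E_L(\lambda)$ sits in $\lambda$-degree at most $1$ and is annihilated by the relevant $\lambda$-derivatives.) Consequently your reduction to the three cases $|J|_N\in\{4,5,6\}$ already fails; the case $|J|_N=3$ cannot be excluded by the top-$\Theta$ argument alone.

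\medskip

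\textbf{The incomplete final step.} Even granting a corrected case list, the closing argument is not a proof: you assert that the linear system coming from the $\lambda^p$-coefficients of $\tfrac{d^2}{d\lambda^2}E_\emptyset(\lambda)$ forces $m_N=0$ for $N\ge5$, but you neither write down the coefficients $\alpha_p(N)$ nor verify their nonvanishing, and you do not explain how the interaction with the lower levels $m_{N-1},m_{N-2},\ldots$ and with the dual contribution $T({\xi_{*}}_\lambda\vec{m})$ (which is nonzero precisely on small $|I|$, i.e.\ on the lowest surviving levels) is controlled. This is the heart of the matter and cannot be left as a promise.

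\medskip

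\textbf{Comparison with the paper.} The paper avoids both difficulties by working with \textbf{S1} for a single $\xi_j$ (length~$1$), expanding in the basis $\lambda^{p}(\lambda+\Theta)^{s}$ rather than $\lambda^{p}\Theta^{q}$, and reading off the coefficients for $p=3,2,1,0$ in succession. Each new coefficient, after substituting the previous relations, collapses to a clean identity; the last one ($p=0$, $s\ge3$) yields directly $(\xi_j\star\eta_I)\otimes v_{I,s+1}=0$ and $\partial_j\eta_I\otimes v_{I,s+2}=0$, hence $v_{I,k}=0$ for $|I|\le5$, $k\ge4$ and $v_{*,k}=0$ for $k\ge5$. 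No homogeneity reduction, no case split on $|J|_N$, and no use of \textbf{S2}, \textbf{S3} are needed.
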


\begin{proof}
By Remark \ref{appoggiotecnicos1s2s3ck6}, condition \textbf{S1} for $\xi_{1}$ reduces to:
\begin{align*}
\frac{d^{2}}{d \lambda^{2}} \left(T({\xi_{1} \,}_{\lambda}\vec{m}+i\lambda^{2}({\xi_{23456}\,}_{\lambda}\vec{m}))\right)=0.
\end{align*}
Using Proposition \ref{actiondualck6} and Lemma \ref{lambda+thetack6}, the previous equation reduces to:
\begin{align}
\label{appoggiolemmatecnico}
&0=\frac{d^{2}}{d \lambda^{2}} \sum_{k=0}^{N}\sum_{I}(\lambda+\Theta)^{k}(-1)^{1+|I|}\Bigg\{ \Bigg[-\Theta (\xi_{1}\star \eta_{I}) \otimes v_{I,k}+\partial_{1}\eta_{I} \otimes v_{I,k}+\lambda \bigg((\xi_{1}\star \eta_{I})\otimes t. v_{I,k}\\ \nonumber
&+\sum_{l=1}^{6}\partial_{l}(\xi_{1l}\star \eta_{I})\otimes v_{I,k}-\sum_{j \neq 1} (\xi_{j}\star \eta_{I}) \otimes \xi_{j1}.v_{I,k}\bigg)-\lambda^{2}\sum_{l<j} (\xi_{1lj} \star \eta_{I}) \otimes \xi_{jl}. v_{I,k}\Bigg]\\ \nonumber
&+i\lambda^{2}\Bigg[3\Theta (\xi_{23456}\star \eta_{I})\otimes v_{I,k}+\sum_{l=1}^{6}(\partial_{l}\xi_{23456}\star \partial_{l}\eta_{I}) \otimes v_{I,k}-\sum_{r<s} (\partial_{rs}\xi_{23456}\star \eta_{I}) \otimes \xi_{sr}.v_{I,k}\\ \nonumber
&+\lambda\bigg((\xi_{23456}\star \eta_{I} )\otimes t.v_{I,k}+\sum_{l=1}^{6}\partial_{l}(\xi_{23456l}\star \eta_{I})\otimes v_{I,k}-\sum_{l \neq j} (\partial_{l}\xi_{23456j}\star \eta_{I})\otimes \xi_{jl}.v_{I,k}\bigg)\Bigg] \Bigg\}\\ \nonumber
&=\sum_{k=0}^{N} \sum_{I} (\lambda+\Theta)^{k} (-1)^{1+|I|}\bigg(-2 \sum_{l<j} (\xi_{1lj}\star \eta_{I}) \otimes \xi_{jl} . v_{I,k}\bigg)\\ \nonumber
&+2 \sum _{k=1}^{N} \sum _{I} k  (\lambda+\Theta)^{k-1} (-1)^{1+|I|} \bigg[(\xi_{1}\star \eta_{I}) \otimes t. v_{I,k}+\sum_{l=1}^{6}\partial_{l}(\xi_{1l}\star \eta_{I}  )\otimes v_{I,k}-\sum_{j \neq 1} (\xi_{j}\star \eta_{I}) \otimes \xi_{j1}. v_{I,k}\\ \nonumber
&-2 \lambda  \sum_{l<j} (\xi_{1lj} \star \eta_{I}) \otimes \xi_{jl} .v_{I,k} \bigg]+ \sum _{k=2}^{N} \sum _{I} k(k-1) (\lambda+\Theta)^{k-2} (-1)^{1+|I|} \bigg[-\Theta (\xi_{1} \star \eta_{I}) \otimes v_{I,k}+\partial_{1}\eta_{I} \otimes  v_{I,k}\\ \nonumber
&+\lambda\bigg((\xi_{1} \star \eta_{I}) \otimes t.v_{I,k}+\sum^{6}_{l=1} \partial_{l}(\xi_{1l}\star \eta_{I} )\otimes v_{I,k}-\sum_{j \neq 1} (\xi_{j}\star \eta_{I} )\otimes \xi_{j1}.  v_{I,k}\bigg)-\lambda^{2}\sum_{l<j}( \xi_{1lj} \star \eta_{I} )\otimes \xi_{jl}.  v_{I,k}\bigg]\\ \nonumber
&+\sum_{k=0}^{N} \sum_{I}(-1)^{1+|I|}\big(2i(\lambda+\Theta)^{k}+4i\lambda k(\lambda+\Theta)^{k-1} +i\lambda^{2} k(k-1)(\lambda+\Theta)^{k-2} \big)\cdot \\ \nonumber
&\bigg[3 \Theta (\xi_{23456}\star \eta_{I}) \otimes  v_{I,k}+\sum_{l=1}^{6}( \partial_{l}\xi_{23456}\star \partial_{l}\eta_{I})\otimes  v_{I,k}-\sum_{r<s} (\partial_{rs}\xi_{23456} \star \eta_{I}) \otimes \xi_{sr}.  v_{I,k}\\ \nonumber
&+\lambda \bigg((\xi_{23456}\star \eta_{I} )\otimes t.v_{I,k}+\sum^{6}_{l=1} \partial_{l}(\xi_{23456l}\star \eta_{I}) \otimes  v_{I,k}- \sum_{l \neq j} (\partial_{l}\xi_{23456j} \star \eta_{I}) \otimes \xi_{jl}.  v_{I,k} \bigg)\bigg]\\ \nonumber
&+\sum^{N}_{k=0} \sum_{I} (-1)^{1+|I|}\big(4i\lambda (\lambda+\Theta)^{k}+2 i\lambda^{2}k (\lambda+\Theta)^{k-1}  \big)\bigg[(\xi_{23456}\star \eta_{I} )\otimes t. v_{I,k}\\ \nonumber
&+\sum_{l=1}^{6}\partial_{l}(\xi_{23456l}\star \eta_{I}) \otimes  v_{I,k}-\sum_{l \neq j} (\partial_{l} \xi_{23456j}\star \eta_{I} )\otimes \xi_{jl}.  v_{I,k}\bigg].
\end{align}
We consider the previous expression as a polynomial in $\lambda$ and $\lambda+\Theta$, by writing $\Theta$ as $(\lambda+\Theta)-\lambda$.\\
We look at the coefficient of $\lambda^{3}(\lambda+\Theta)^{s}$, for a fixed $s\geq 0$, in \eqref{appoggiolemmatecnico} and we obtain that:
\begin{align}
\label{alfa}
 &\sum_{I} (-1)^{1+|I|}   \bigg[-3 ( \xi_{23456}\star \eta_{I}) \otimes  v_{I,s+2}+(\xi_{23456}\star \eta_{I}) \otimes t .v_{I,s+2}\\ \nonumber
&+\sum_{l=1}^{6}\partial_{l}(\xi_{23456l}\star \eta_{I}) \otimes  v_{I,s+2}-\sum_{l \neq j} (\partial_{l} \xi_{23456j} \star \eta_{I} )\otimes \xi_{jl}. v_{I,s+2}\bigg]=0.
\end{align}
We consider the coefficient of $\lambda^{2}(\lambda+\Theta)^{s}$, for a fixed $s\geq 1$, in \eqref{appoggiolemmatecnico} and we obtain that:
\begin{align*}
& \sum _{I}(-1)^{1+|I|}(s+1)\bigg\{ -(s+2)   \sum_{l<j} (\xi_{1lj}\star \eta_{I}) \otimes \xi_{jl}. v_{I,s+2}\\
&+4i \bigg[-3 ( \xi_{23456}\star \eta_{I}) \otimes  v_{I,s+1}+(\xi_{23456}\star \eta_{I} )\otimes t. v_{I,s+1}\\ \nonumber
&+\sum_{l=1}^{6}\partial_{l}(\xi_{23456l}\star \eta_{I}) \otimes  v_{I,s+1}-\sum_{l \neq j}( \partial_{l} \xi_{23456j}\star \eta_{I}) \otimes \xi_{jl}.  v_{I,s+1}\bigg]\\
&+i s 3  (\xi_{23456}\star \eta_{I} )\otimes  v_{I,s+1}+ i (s+2) \bigg[ \sum_{l=1}^{6}(\partial_{l}\xi_{23456}\star \partial_{l} \eta_{I}) \otimes  v_{I,s+2}-\sum_{r<p}( \partial_{rp}\xi_{23456}\star \eta_{I}) \otimes \xi_{pr}. v_{I,s+2}\bigg]\\
&+ 2 i\bigg[(\xi_{23456}\star \eta_{I}) \otimes t. v_{I,s+1}+\sum_{l=1}^{6}\partial_{l}(\xi_{23456l}\star \eta_{I}) \otimes  v_{I,s+1}-\sum_{l \neq j}( \partial_{l} \xi_{23456j}\star \eta_{I}) \otimes \xi_{jl}.  v_{I,s+1}\bigg]\bigg\}=0.
\end{align*}
Using \eqref{alfa}, we obtain that the sum over $I$ of the terms in the second and third rows is zero, and the sum over $I$ of the last row is equal to $\sum_{I} (-1)^{1+|I|} 6i ( \xi_{23456}\star \eta_{I} )\otimes  v_{I,s+1}$. 
Hence for $s \geq 1$:
\begin{align}
\label{beta}
& \sum _{I} (-1)^{1+|I|}\bigg\{ -\sum_{l<j} (\xi_{1lj}\star \eta_{I}) \otimes \xi_{jl}.  v_{I,s+2}+3i (\xi_{23456}\star \eta_{I} )\otimes  v_{I,s+1}\\  \nonumber
&+ i  \bigg[ \sum_{l=1}^{6}(\partial_{l}\xi_{23456}\star \partial_{l} \eta_{I}) \otimes  v_{I,s+2}-\sum_{r<p} (\partial_{rp}\xi_{23456}\star \eta_{I} )\otimes \xi_{pr}. v_{I,s+2}\bigg] \bigg\} =0.  
\end{align}
We consider the coefficient of $\lambda(\lambda+\Theta)^{s}$, for a fixed $s\geq 2$, in \eqref{appoggiolemmatecnico} and we obtain that:
\begin{align*}
&\sum _{I}(-1)^{1+|I|}\bigg\{  -4  (s+1) \sum_{l<j} (\xi_{1lj}\star \eta_{I} )\otimes \xi_{jl}. v_{I,s+1} \\
&+  (s+1)(s+2)   \bigg[( \xi_{1}\star \eta_{I}) \otimes v_{I,s+2}+(\xi_{1}\star \eta_{I}) \otimes t.v_{I,s+2}+\sum^{6}_{l=1} \partial_{l}(\xi_{1l}\star \eta_{I} )\otimes v_{I,s+2}\\
&-\sum_{j \neq 1} (\xi_{j}\star \eta_{I}) \otimes \xi_{j1}.  v_{I,s+2}\bigg]\\ \nonumber
&+ 2i \bigg[-3  (\xi_{23456}\star \eta_{I}) \otimes  v_{I,s}+(\xi_{23456}\star \eta_{I}) \otimes t.v_{I,s}\\ \nonumber
&+\sum^{6}_{l=1} \partial_{l}(\xi_{23456l}\star \eta_{I}) \otimes  v_{I,s}- \sum_{l \neq j} (\partial_{l}\xi_{23456j}\star \eta_{I} )\otimes \xi_{jl}.  v_{I,s} \bigg]\\
&+ 12is  (\xi_{23456}\star \eta_{I}) \otimes  v_{I,s}+  4i (s+1) \bigg[ \sum_{l=1}^{6}(\partial_{l}\xi_{23456}\star \partial_{l} \eta_{I}) \otimes  v_{I,s+1}-\sum_{r<p} \partial_{rp}(\xi_{23456}\star \eta_{I}) \otimes \xi_{pr}. v_{I,s+1}\bigg]\\
&+ 4i\bigg[(\xi_{23456}\star \eta_{I} )\otimes t. v_{I,s}+\sum_{l=1}^{6}\partial_{l}(\xi_{23456l}\star \eta_{I}) \otimes  v_{I,s}-\sum_{l \neq j} (\partial_{l} \xi_{23456j}\star \eta_{I}) \otimes \xi_{jl}.  v_{I,s}\bigg]\bigg\}=0.
\end{align*}
We use \eqref{alfa} to point out that the sum over $I$ of the terms in the fourth and fifth rows is zero. Moreover, due to \eqref{alfa}, the sum over $I$ of the terms in the last row is equal to $\sum_{I}(-1)^{1+|I|}12i ( \xi_{23456}\star \eta_{I}) \otimes  v_{I,s}$.
Finally the sum of $\sum_{I}(-1)^{1+|I|}12i  (\xi_{23456}\star \eta_{I} )\otimes  v_{I,s}$ plus the sum over $I$ of the terms from the first and sixth rows is zero due to \eqref{beta}.\\
Therefore for $s \geq 2$:
\begin{align}
\label{gamma}
&\sum _{I}   (-1)^{1+|I|} \bigg[ (\xi_{1}\star \eta_{I}) \otimes v_{I,s+2}+(\xi_{1}\star \eta_{I}) \otimes t.v_{I,s+2}+\sum^{6}_{l=1} \partial_{l}(\xi_{1l}\star \eta_{I} )\otimes v_{I,s+2}\\ \nonumber
&-\sum_{j \neq 1} (\xi_{j}\star \eta_{I}) \otimes \xi_{j1}. v_{I,s+2}\bigg]=0.
\end{align}
Finally we consider the coefficient of $(\lambda+\Theta)^{s}$, for a fixed $s\geq 3$, in \eqref{appoggiolemmatecnico} and we obtain that:
\begin{align*}
&\sum_{I} (-1)^{1+|I|}\bigg\{-2 \sum_{l<j} (\xi_{1lj}\star \eta_{I}) \otimes \xi_{jl} . v_{I,s}\\
&+2  (s+1)   \bigg[(\xi_{1}\star \eta_{I}) \otimes t. v_{I,s+1}+\sum_{l=1}^{6}\partial_{l}(\xi_{1l}\star \eta_{I} ) \otimes v_{I,s+1}-\sum_{j \neq 1} (\xi_{j}\star \eta_{I}) \otimes \xi_{j1}. v_{I,s+1} \bigg]\\
&-   s(s+1)   (\xi_{1}\star \eta_{I}) \otimes v_{I,s+1}+  (s+1)(s+2)   \partial_{1}\eta_{I} \otimes  v_{I,s+2}\\
&+ 6i (\xi_{23456}\star \eta_{I} )\otimes  v_{I,s-1}+ 2i \bigg[\sum_{l=1}^{6} (\partial_{l}\xi_{23456}\star \partial_{l}\eta_{I})\otimes  v_{I,s}-\sum_{r<p}( \partial_{rp}\xi_{23456} \star \eta_{I}) \otimes \xi_{pr}.  v_{I,s}\bigg]\bigg\}=0.
\end{align*}
Using \eqref{beta}, we observe that the sum over $I$ of the terms from the first and the last row is zero. Using \eqref{gamma} we obtain that the sum of the terms from the second row is equal to $-2 \sum _{I} (s+1)  (-1)^{1+|I|} (\xi_{1}\star \eta_{I}) \otimes  v_{I,s+1}$. 
Thus for $s \geq 3$:
\begin{align}
 \label{delta}
&\sum _{I}   (-1)^{1+|I|} (  (\xi_{1}\star \eta_{I}) \otimes v_{I,s+1}-  \partial_{1}\eta_{I} \otimes  v_{I,s+2})=0.
\end{align}
By linear independence, we obtain:
\begin{align*}
&\sum _{I}   (-1)^{1+|I|}  (\xi_{1}\star \eta_{I}) \otimes v_{I,s+1}=0. 
\end{align*}
Therefore $v_{I,k}=0$ for $|I|\leq 5$, $1 \notin I$ and $k \geq 4$. We point out that $1 \notin I$ is not necessary, since we could have chosen at the beginning any $\xi_{i}$ instead of $\xi_{1}$.
Finally, the coefficient of $\eta_{1}^{*}$ in \eqref{delta} is $v_{*,s+2}$.
Hence $v_{*,k}=0$ if $k\geq 5$.
\end{proof}
By Lemma \ref{lemmagrado4}, for a singular vector $\vec{m}$, $T(\vec{m})$ has the following form:
\begin{align}
\label{ck6max4}
T(\vec{m} )= \Theta^{4} \sum_{I\in \mathcal{I}_{<} }\eta_{I} \otimes v_{I,4}+  \Theta^{3} \sum_{I \in \mathcal{I}_{<}}\eta_{I} \otimes v_{I,3}+\Theta^{2} \sum_{I\in \mathcal{I}_{<}}\eta_{I} \otimes v_{I,2}+\Theta \sum_{I\in \mathcal{I}_{<}}\eta_{I} \otimes v_{I,1}+ \sum_{I\in \mathcal{I}_{<}}\eta_{I} \otimes v_{I,0}.
\end{align}
Following \cite{ck6}, we write the $\lambda-$action in the following way, using Proposition \ref{actiondualck6} and Lemma \ref{lambda+thetack6}:
\begin{align*}
T ( {\xi_{L}}_{\lambda}\vec{m})=&b_{0}(\xi_{L})+\lambda(B_{0}(\xi_{L})-a_{0}(\xi_{L}))+\lambda^{2}C_{0}(\xi_{L})\\ \nonumber
&+(\lambda+\Theta)[a_{0}(\xi_{L})+b_{1}(\xi_{L})]+ (\lambda+\Theta)\lambda(B_{1}(\xi_{L})-a_{1}(\xi_{L}))+ (\lambda+\Theta)\lambda^{2}C_{1}(\xi_{L})\\ \nonumber
&+(\lambda+\Theta)^{2}[a_{1}(\xi_{L})+b_{2}(\xi_{L})]+(\lambda+\Theta)^{2}\lambda(B_{2}(\xi_{L})-a_{2}(\xi_{L}))+(\lambda+\Theta)^{2}\lambda^{2}C_{2}(\xi_{L})\\
&+(\lambda+\Theta)^{3}[a_{2}(\xi_{L})+b_{3}(\xi_{L})]+(\lambda+\Theta)^{3}\lambda(B_{3}(\xi_{L})-a_{3}(\xi_{L}))+(\lambda+\Theta)^{3}\lambda^{2}C_{3}(\xi_{L})\\
&+(\lambda+\Theta)^{4}[a_{3}(\xi_{L})+b_{4}(\xi_{L})]+(\lambda+\Theta)^{4}\lambda(B_{4}(\xi_{L})-a_{4}(\xi_{L}))+(\lambda+\Theta)^{4}\lambda^{2}C_{4}(\xi_{L})	\\ \nonumber
&+(\lambda+\Theta)^{5}a_{4}(\xi_{L}),
\end{align*}
where the coefficients $a_{p}(\xi_{L}),b_{p}(\xi_{L}),B_{p}(\xi_{L}),C_{p}(\xi_{L})$ depend on $\xi_{L}$ for all $0\leq p \leq 4$ and are explicitly defined as follows. For all $0\leq p\leq 4$ we let:
\begin{align}
\label{abcdck6}
a_{p}(\xi_{L})&=\sum_{I} (-1)^{(|L|(|L|+1)/2)+|L||I|} \bigg[ (|L|-2)  (\xi_{L} \star \eta_{I}) \otimes v_{I,p}\bigg];\\ \nonumber
b_{p}(\xi_{L})&=\sum_{I} (-1)^{(|L|(|L|+1)/2)+|L||I|} \bigg[ -(-1)^{|L|} \sum^{6}_{i=1}(\partial_{i}\xi_{L}\star \partial_{i}\eta_{I}) \otimes v_{I,p} -\sum_{r<s}  (\partial_{rs}\xi_{L}\star \eta_{I})\otimes \xi_{sr}.v_{I,p} \bigg];\\ \nonumber
B_{p}(\xi_{L})&=\sum_{I} (-1)^{(|L|(|L|+1)/2)+|L||I|} \bigg[(\xi_{L} \star \eta_{I} ) \otimes t.v_{I,p}-(-1)^{|L|}\sum^{6}_{i=1} \partial_{i}(\xi_{Li} \star \eta_{I} ) \otimes v_{I,p}\\ \nonumber
&+ (-1)^{|L|} \sum _{i \neq j} (\partial_{i}\xi_{Lj} \star \eta_{I}) \otimes \xi_{ji}. v_{I,p} \bigg];\\ \nonumber
C_{p}(\xi_{L})&=\sum_{I} (-1)^{(|L|(|L|+1)/2)+|L||I|} \bigg[- \sum _{i < j} (\xi_{Lij}\star \eta_{I})\otimes   \xi_{ji}.v_{I,p}\bigg].
\end{align}
We will write $a_{p}$ instead of $a_{p}(\xi_{L})$ if there is no risk of confusion, and similarly for the others.
Analogously:
\begin{align}
\label{abcdck6dual}
T({ \xi_{L}^{*}}_{\lambda}\vec{m})=&bd_{0}(\xi_{L})+\lambda(Bd_{0}(\xi_{L})-ad_{0}(\xi_{L}))+\lambda^{2}Cd_{0}(\xi_{L})\\ \nonumber
&+(\lambda+\Theta)[ad_{0}(\xi_{L})+bd_{1}(\xi_{L})]+ (\lambda+\Theta)\lambda(Bd_{1}(\xi_{L})-ad_{1}(\xi_{L}))+ (\lambda+\Theta)\lambda^{2}Cd_{1}(\xi_{L})\\ \nonumber
&+(\lambda+\Theta)^{2}[ad_{1}(\xi_{L})+bd_{2}(\xi_{L})]+(\lambda+\Theta)^{2}\lambda(Bd_{2}(\xi_{L})-ad_{2}(\xi_{L}))+(\lambda+\Theta)^{2}\lambda^{2}Cd_{2}(\xi_{L})\\ \nonumber
&+(\lambda+\Theta)^{3}[ad_{2}(\xi_{L})+bd_{3}(\xi_{L})]+(\lambda+\Theta)^{3}\lambda(Bd_{3}(\xi_{L})-ad_{3}(\xi_{L}))+(\lambda+\Theta)^{3}\lambda^{2}Cd_{3}(\xi_{L})\\ \nonumber
&+(\lambda+\Theta)^{4}[ad_{3}(\xi_{L})+bd_{4}(\xi_{L})]+(\lambda+\Theta)^{4}\lambda(Bd_{4}(\xi_{L})-ad_{4}(\xi_{L}))+(\lambda+\Theta)^{4}\lambda^{2}Cd_{4}(\xi_{L})\\ \nonumber
&+(\lambda+\Theta)^{5}ad_{4}(\xi_{L}),
%
%
%
%
\end{align}
where $ad_{p}(\xi_{L})=a_{p}(\xi_{L}^{*})$, $bd_{p}(\xi_{L})=b_{p}(\xi_{L}^{*})$, $Bd_{p}(\xi_{L})=B_{p}(\xi_{L}^{*})$, $Cd_{p}(\xi_{L})=C_{p}(\xi_{L}^{*})$.
We will write $ad_{p}$ instead of $ad_{p}(\xi_{L})$ if there is no risk of confusion, and similarly for the others. We will shortly write $a_{p}(L)$ (resp.  $ad_{p}(L)$) instead of $ad_{p}(\xi_{L})$ (resp. $ad_{p}(\xi_{L})$) when we need to explicit the dependence and similarly for all the others.
\begin{lem}
\label{lemmatecnicoperck6grado}
Let $\vec{m}$ be a singular vector, such that $T(\vec{m})$ is written as in \eqref{ck6max4}.
\begin{enumerate}
	\item[(i)] Condition \textbf{S2} for $L=j$ implies:
	\begin{align*}
4a_{4}+B_{4}=3a_{3}+B_{3}+4b_{4}=2a_{2}+B_{2}+3b_{3}=B_{1}+a_{1}+2b_{2}=B_{0}+b_{1}=0.
\end{align*}
\item[(ii)] Condition \textbf{S2} for $L=ijk$ implies:
\begin{align*}
&4a_{4}+B_{4}-i(4ad_{4}+Bd_{4})=3a_{3}+B_{3}+4b_{4}-i(3ad_{3}+Bd_{3}+4bd_{4})\\
&=2a_{2}+B_{2}+3b_{3}-i(2ad_{2}+Bd_{2}+3bd_{3})=B_{1}+a_{1}+2b_{2}-i(Bd_{1}+ad_{1}+2bd_{2})\\
&= B_{0}+b_{1}-i(Bd_{0}+bd_{1})=0.
\end{align*}
\item[(iii)] Condition \textbf{S3} for $L=ijk$ implies:
\begin{align*}
&a_{4}-iad_{4}=a_{3}+b_{4}-i(ad_{3}+bd_{4})=a_{2}+b_{3}-i(ad_{2}+bd_{3})=a_{1}+b_{2}-i(ad_{1}+bd_{2})\\
&=a_{0}+b_{1}-i(ad_{0}+bd_{1})=b_{0}-ibd_{0}=0.
\end{align*}
\item[(iv)] Condition \textbf{S1} for $|L|=0$ implies:
\begin{align}
&C_{3}+4B_{4}+6a_{4}=0, \label{11ck6}\\
&  C_{2}+3a_{3}+3B_{3}+6b_{4}=0, \label{12ck6}\\
&2C_{3}+2(B_{4}-a_{4})-iad_{1}-ibd_{2}=0, \label{21ck6}\\
&4C_{2}+3B_{3}-3a_{3}-3iad_{0}-3ibd_{1}=0, \label{26ck6}\\
 &C_{3}  -2iBd_{1}-2ibd_{2}=0,  \label{27ck6}\\
 &C_{2}   -6iBd_{0}  +3i ad_{0} -3ibd_{1}=0, \label{31ck6}  \\                           
&    10Cd_{0}    +4Bd_{1} -3ad_{1}+bd_{2}=0. \label{32ck6}   
\end{align}
\end{enumerate}
\end{lem}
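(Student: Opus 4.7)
The strategy is direct coefficient extraction: substitute the explicit expansions \eqref{abcdck6} of $T({\xi_L}_\lambda\vec m)$ and \eqref{abcdck6dual} of $T({\xi_L^*}_\lambda\vec m)$ into each of the singular vector conditions \textbf{S1}--\textbf{S3}, and equate the coefficients of monomials in the two algebraically independent variables $\lambda$ and $\lambda+\Theta$, as was done in the proof of Lemma \ref{lemmagrado4}. Since $T(\vec m)$ has degree at most $4$ in $\Theta$ by Lemma \ref{lemmagrado4}, these extractions yield finitely many scalar equations per condition.

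For (i), $|L|=1$ makes the coefficient of the dual term equal to $i\lambda^{2}$, which vanishes after one $\lambda$-derivative at $\lambda=0$; hence \textbf{S2} collapses to $\frac{d}{d\lambda}T({\xi_j}_\lambda\vec m)\big|_{\lambda=0}=0$. Differentiating the expansion and setting $\lambda=0$ yields a polynomial in $\Theta$ of degree four whose coefficients of $\Theta^{0},\ldots,\Theta^{4}$ are exactly $B_{0}+b_{1}$, $a_{1}+2b_{2}+B_{1}$, $2a_{2}+3b_{3}+B_{2}$, $3a_{3}+4b_{4}+B_{3}$, $4a_{4}+B_{4}$, giving the five identities of (i) (note the last one uses $5\Theta^{4}a_{4}$ from differentiating the top term $(\lambda+\Theta)^{5}a_{4}$). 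For (ii), the same computation with $|L|=3$ retains the dual contribution (since $(-1)^{6}\lambda^{0}=1$), and the five identities of (ii) follow by subtracting $i$ times the corresponding $ad,bd,Bd$ expressions. For (iii), \textbf{S3} with $|L|=3$ simply evaluates the combined expression at $\lambda=0$; only $\lambda$-independent terms survive, yielding $\sum_{p=0}^{5}\Theta^{p}[(a_{p-1}+b_{p})-i(ad_{p-1}+bd_{p})]=0$ (with $a_{-1}=ad_{-1}=0$ and top contribution $\Theta^{5}(a_{4}-iad_{4})$), whose $\Theta^{k}$-coefficients are the six identities of (iii).

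For (iv), condition \textbf{S1} with $L=\emptyset$ requires the polynomial $T(1_{\lambda}\vec m)-i\lambda^{3}T({\xi_*}_{\lambda}\vec m)$ in $\lambda$ to have degree at most one; equivalently, after writing $\Theta=(\lambda+\Theta)-\lambda$, each coefficient of $\lambda^{k}(\lambda+\Theta)^{s}$ with $k\geq 2$ must vanish. Direct inspection yields: \eqref{11ck6} and \eqref{12ck6} from coefficients of $\lambda^{2}(\lambda+\Theta)^{s}$ (using that the top term $(\lambda+\Theta)^{5}a_{4}$ contributes $\binom{5}{2}a_{4}=10a_{4}$ to the relevant monomial); the identities \eqref{21ck6}, \eqref{26ck6} from coefficients of $\lambda^{3}(\lambda+\Theta)^{s}$ (where the dual $\lambda^{3}$ shift begins contributing $-i(ad_{s-1}+bd_{s})$); and \eqref{27ck6}, \eqref{31ck6}, \eqref{32ck6} from coefficients of $\lambda^{4}(\lambda+\Theta)^{s}$ and $\lambda^{5}(\lambda+\Theta)^{s}$ at appropriate $s$, possibly after subtracting multiples of \eqref{11ck6}--\eqref{12ck6} to isolate the stated combinations.

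The main technical obstacle is the bookkeeping in (iv): the $\lambda^{3}$ factor multiplying the dual part couples the $ad_{p},bd_{p},Bd_{p},Cd_{p}$ to the non-dual $a_{q},b_{q},B_{q},C_{q}$ at shifted indices, and the binomial expansion of $(\lambda+\Theta)^{p}$ produces numerous cross contributions; one must organize the extraction carefully to isolate exactly the seven listed identities. By contrast, parts (i)--(iii) are largely routine coefficient matching once the expansions \eqref{abcdck6} and \eqref{abcdck6dual} are in hand.
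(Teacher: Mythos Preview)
Your treatment of parts (i)--(iii) is correct and matches the paper's ``direct computation'' exactly: differentiate or evaluate the expansion in $\lambda$ and $(\lambda+\Theta)$, set $\lambda=0$, and read off the $\Theta^{p}$-coefficients.

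In part (iv), however, your stated equivalence is wrong. Condition \textbf{S1} says that $E(\lambda):=T(1_{\lambda}\vec m)-i\lambda^{3}T({\xi_*}_{\lambda}\vec m)$ has $\lambda$-degree at most one \emph{with $\Theta$ held constant}. This is \emph{not} the same as ``each coefficient of $\lambda^{k}(\lambda+\Theta)^{s}$ with $k\geq 2$ vanishes'': the monomial $(\lambda+\Theta)^{5}$ has $k=0$ in the $(\lambda,\lambda+\Theta)$-expansion yet contributes $\binom{5}{2}\lambda^{2}\Theta^{3}$ in the $(\lambda,\Theta)$-expansion. If you literally extract $c_{2,s}$ from the displayed expansion you get only $C_{s}=0$, which is not \eqref{11ck6}. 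Your own parenthetical about $\binom{5}{2}a_{4}=10a_{4}$ shows you are (correctly) expanding $(\lambda+\Theta)^{p}$ binomially, i.e.\ working in the $(\lambda,\Theta)$ basis---so the method you describe and the method you use are different.

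The clean route, and the one actually used in the proof of Lemma~\ref{lemmagrado4}, is: first compute $\tfrac{d^{2}}{d\lambda^{2}}E$ (remembering that $\tfrac{d}{d\lambda}$ hits both $\lambda$ and $(\lambda+\Theta)$, so $\tfrac{d^{2}}{d\lambda^{2}}[\lambda^{k}(\lambda+\Theta)^{s}]=k(k-1)\lambda^{k-2}(\lambda+\Theta)^{s}+2ks\lambda^{k-1}(\lambda+\Theta)^{s-1}+s(s-1)\lambda^{k}(\lambda+\Theta)^{s-2}$), and \emph{then} set each $\lambda^{a}(\lambda+\Theta)^{b}$-coefficient of the result to zero. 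Doing so, $(a,b)=(0,3),(0,2),(1,2),(1,1),(2,1),(2,0),(3,0)$ give precisely \eqref{11ck6}--\eqref{32ck6}, in that order, with no further linear combinations needed. Your attribution of \eqref{27ck6}--\eqref{32ck6} to ``$\lambda^{4}$ and $\lambda^{5}$ coefficients, possibly after subtracting multiples of earlier equations'' is therefore also off; once the differentiation is done correctly each identity drops out directly.
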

\begin{proof}
It follows by direct computations using notation \eqref{abcdck6} and \eqref{abcdck6dual}.
\end{proof}
\begin{lem}
\label{lemmatecnicoperck6grado2}
Let $\vec{m}$ be a singular vector, such that $T(\vec{m})$ is written as in \eqref{ck6max4}.
\item Condition \textbf{S2} for $L=j$ implies:
	\begin{align}
\label{B1+a1+2b2}
0=&\sum_{I} (-1)^{1+|I|} \bigg[(\xi_{j} \star \eta_{I} ) \otimes t.v_{I,1}+\sum^{6}_{i=1} \partial_{i}(\xi_{ji} \star \eta_{I} ) \otimes v_{I,1}- \sum _{i \neq l} (\partial_{i}\xi_{jl} \star \eta_{I}) \otimes \xi_{li}. v_{I,1}\\ \nonumber
& -  (\xi_{j} \star \eta_{I}) \otimes v_{I,1}+2   \partial_{j}\eta_{I} \otimes v_{I,2} \bigg];
\end{align}
\begin{align}
\label{B0+b1}
&0=\sum_{I} (-1)^{1+|I|} \bigg[(\xi_{j} \star \eta_{I} ) \otimes t.v_{I,0}+\sum^{6}_{i=1} \partial_{i}(\xi_{ji} \star \eta_{I} ) \otimes v_{I,0}- \sum _{i \neq l} (\partial_{i}\xi_{jl} \star \eta_{I}) \otimes \xi_{li}. v_{I,0}+ \partial_{j}\eta_{I} \otimes v_{I,1}\bigg];
\end{align}
\begin{align}
\label{2a2+B2+3b3}
&0=\sum_{I} (-1)^{1+|I|} \bigg[ -2  (\xi_{j} \star \eta_{I}) \otimes v_{I,2} +(\xi_{j} \star \eta_{I} ) \otimes t.v_{I,2}\\ \nonumber
&+\sum^{6}_{i=1} \partial_{i}(\xi_{ji} \star \eta_{I} ) \otimes v_{I,2}- \sum _{i \neq l} (\partial_{i}\xi_{jl} \star \eta_{I}) \otimes \xi_{li}. v_{I,2}
+3    \partial_{j}\eta_{I} \otimes v_{I,3} \bigg].
\end{align}
\item Conditions \textbf{S2} and \textbf{S3} for $L=ijk$ imply:
\begin{align}
\label{B1-a1}
&0=\sum_{I} (-1)^{|I|} \bigg[(\xi_{ijk} \star \eta_{I} ) \otimes t.v_{I,1}+\sum^{6}_{l=1} \partial_{l}(\xi_{ijkl} \star \eta_{I} ) \otimes v_{I,1}- \sum _{h \neq l} (\partial_{h}\xi_{ijkl} \star \eta_{I}) \otimes \xi_{lh}. v_{I,1}\\\nonumber
&-   (\xi_{ijk} \star \eta_{I}) \otimes v_{I,1}-i \bigg( (\xi^{*}_{ijk} \star \eta_{I} ) \otimes t.v_{I,1}+\sum^{6}_{l=1} \partial_{l}(\xi^{*}_{ijk}\xi_{l} \star \eta_{I} ) \otimes v_{I,1} \\ \nonumber
&- \sum _{h \neq l} (\partial_{h}\xi^{*}_{ijk}\xi_{l} \star \eta_{I}) \otimes \xi_{lh}. v_{I,1} -   (\xi_{ijk}^{*} \star \eta_{I}) \otimes v_{I,1}  \bigg)\bigg];
\end{align}
\begin{align}
\label{a0-B0}
&0=\sum_{I} (-1)^{|I|} \bigg[   (\xi_{ijk} \star \eta_{I}) \otimes v_{I,0}-(\xi_{ijk} \star \eta_{I} ) \otimes t.v_{I,0}-\sum^{6}_{l=1} \partial_{l}(\xi_{ijkl} \star \eta_{I} ) \otimes v_{I,0}\\ \nonumber
&+ \sum _{h \neq j} (\partial_{h}\xi_{ijkl} \star \eta_{I}) \otimes \xi_{lh}. v_{I,0}-i \bigg( (\xi^{*}_{ijk} \star \eta_{I}) \otimes v_{I,0}-(\xi^{*}_{ijk} \star \eta_{I} ) \otimes t.v_{I,0}\\\nonumber
&-\sum^{6}_{l=1} \partial_{l}(\xi^{*}_{ijk} \xi_{l} \star \eta_{I} ) \otimes v_{I,0}+ \sum _{h \neq j} (\partial_{h}\xi^{*}_{ijk} \xi_{l} \star \eta_{I}) \otimes \xi_{lh}. v_{I,0}  \bigg)\bigg];
\end{align}
\begin{align}
\label{-a2+B2}
&0=\sum_{I} (-1)^{|I|} \bigg[ -  (\xi_{ijk} \star \eta_{I}) \otimes v_{I,2}   +  (\xi_{ijk} \star \eta_{I} ) \otimes t.v_{I,2}+\sum^{6}_{l=1} \partial_{l}(\xi_{ijkl} \star \eta_{I} ) \otimes v_{I,2}\\ \nonumber
&- \sum _{h \neq l} (\partial_{h}\xi_{ijkl} \star \eta_{I}) \otimes \xi_{lh}. v_{I,2} -i \bigg( -(\xi^{*}_{ijk} \star \eta_{I}) \otimes v_{I,2}   +  (\xi^{*}_{ijk} \star \eta_{I} ) \otimes t.v_{I,2}\\ \nonumber
&+\sum^{6}_{l=1} \partial_{l}(\xi^{*}_{ijk}\xi_{l} \star \eta_{I} ) \otimes v_{I,2}- \sum _{h \neq l} (\partial_{h}\xi^{*}_{ijk}\xi_{l} \star \eta_{I}) \otimes \xi_{lh}. v_{I,2} \bigg)\bigg].
\end{align}
\begin{proof}
These are the explicit expression of some of equations of Lemma \ref{lemmatecnicoperck6grado}.
Equation \eqref{B1+a1+2b2} is $B_1(j)+a_1(j)+2b_2(j)=0$, \eqref{B0+b1} is $B_{0}(j)+b_{1}(j)=0$, \eqref{2a2+B2+3b3} is $2a_{2}(j)+B_{2}(j)+3b_{3}(j)=0$.
By Lemma \ref{lemmatecnicoperck6grado}, relations \textbf{S2} and \textbf{S3} for $L=ijk$ imply, taking linear combinations:
\begin{align*}
B_{1}-a_{1}-i(Bd_{1}-ad_{1})=a_{0}-B_{0}-i(ad_{0}-Bd_{0})=-a_{2}+B_{2}-i(-ad_{2}+Bd_{2})=0.
\end{align*}
Equation \eqref{B1-a1} is $B_{1}(ijk)-a_{1}(ijk)-i(Bd_{1}(ijk)-ad_{1}(ijk))=0$, equation \eqref{a0-B0} is $a_{0}(ijk)-B_{0}(ijk)-i(ad_{0}(ijk)-Bd_{0}(ijk))=0$, equation \eqref{-a2+B2} is $-a_{2}(ijk)+B_{2}(ijk)-i(-ad_{2}(ijk)+Bd_{2}(ijk))=0$.
\end{proof}
\end{lem}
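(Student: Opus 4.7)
The plan is purely computational: each of the claimed identities is obtained by taking one of the scalar relations already produced in Lemma~\ref{lemmatecnicoperck6grado} and unfolding it through the defining formulas~\eqref{abcdck6} and \eqref{abcdck6dual} for $a_p,b_p,B_p,C_p$ and their dual counterparts. So the proof will essentially be a substitution exercise, organized by the value of $|L|$.

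For part~(i), I would start from Lemma~\ref{lemmatecnicoperck6grado}(i) specialized to $L=j$, which already supplies the scalar equations $B_1(\xi_j)+a_1(\xi_j)+2b_2(\xi_j)=0$, $B_0(\xi_j)+b_1(\xi_j)=0$ and $2a_2(\xi_j)+B_2(\xi_j)+3b_3(\xi_j)=0$. Substituting $|L|=1$ into \eqref{abcdck6}, so that the prefactor collapses to $(-1)^{1+|I|}$ and $(|L|-2)=-1$, and expanding each summand term by term, one recovers \eqref{B1+a1+2b2}, \eqref{B0+b1}, \eqref{2a2+B2+3b3}.

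For part~(ii) the strategy is the same but first requires isolating the correct linear combinations of the \textbf{S2} and \textbf{S3} relations so as to eliminate the $b$-- and $bd$--contributions. From parts~(ii) and~(iii) of Lemma~\ref{lemmatecnicoperck6grado}, subtracting twice the \textbf{S3} identity $a_1+b_2-i(ad_1+bd_2)=0$ from the \textbf{S2} identity $B_1+a_1+2b_2-i(Bd_1+ad_1+2bd_2)=0$ yields $B_1-a_1-i(Bd_1-ad_1)=0$; subtracting $B_0+b_1-i(Bd_0+bd_1)=0$ from $a_0+b_1-i(ad_0+bd_1)=0$ gives $a_0-B_0-i(ad_0-Bd_0)=0$; and a third, entirely analogous, combination produces $-a_2+B_2-i(-ad_2+Bd_2)=0$. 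I would then substitute $|L|=3$ into \eqref{abcdck6}--\eqref{abcdck6dual}: here $(|L|-2)=1$, $(-1)^{|L|(|L|+1)/2}=1$ and $(-1)^{|L||I|}=(-1)^{|I|}$, so the global sign becomes $(-1)^{|I|}$, and direct expansion yields \eqref{B1-a1}, \eqref{a0-B0}, \eqref{-a2+B2}.

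The only nontrivial point is sign bookkeeping: one must check that the prefactor $(-1)^{|L|(|L|+1)/2+|L||I|}$ together with the inner sign $(-1)^{|L|}$ appearing in \eqref{abcdck6} combine consistently, that the factor $-i$ in front of the $\xi_L^*$--contributions carries through with the correct global sign in every row, and that the $b$-- and $bd$--terms really do cancel after the prescribed linear combinations are taken. Once these sign checks are carried out, the rest of the argument is a mechanical substitution and the claimed explicit identities read off directly.
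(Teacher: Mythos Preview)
Your proposal is correct and follows essentially the same approach as the paper: identify each displayed identity as the explicit expansion, via \eqref{abcdck6}--\eqref{abcdck6dual}, of one of the scalar relations from Lemma~\ref{lemmatecnicoperck6grado}, and for part~(ii) first form the appropriate linear combinations of the \textbf{S2} and \textbf{S3} equations to cancel the $b_p$ and $bd_p$ contributions. Your write-up is in fact slightly more explicit than the paper's, since you spell out exactly which multiples are subtracted to obtain each of the three combinations $B_1-a_1-i(Bd_1-ad_1)=0$, $a_0-B_0-i(ad_0-Bd_0)=0$, $-a_2+B_2-i(-ad_2+Bd_2)=0$, whereas the paper simply records the outcome.
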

\begin{proof}[Proof of Lemma \ref{lemck6grado}]
By Lemma \ref{lemmagrado4}, for a singular vector $\vec{m}$, $T(\vec{m})$ is written as in \eqref{ck6max4}.
Let us consider \eqref{B1+a1+2b2} for $L=j$; the coefficient of $\eta_{j}$ is:
\begin{align} 
\label{tecres} 
t.v_{\emptyset,1}-6v_{\emptyset,1}=0.
\end{align}
Let us consider \eqref{B0+b1} for $L=j$; the coefficient of $\eta_{j}$ is:
\begin{align}
\label{tecres2} 
t.v_{\emptyset,0}-5v_{\emptyset,0}=0.
\end{align}
Let us consider \eqref{2a2+B2+3b3} for $L=j$; the coefficient of $\eta_{j}$ is:
\begin{align}
\label{tecres3} 
t.v_{\emptyset,2}-7v_{\emptyset,2}=0.
\end{align}
The coefficient of 1 in \eqref{B0+b1} for $L=j$ is $v_{j,1}=0$.
The coefficient of 1 in \eqref{B1+a1+2b2} for $L=j$ is $v_{j,2}=0$.
Now let us consider \eqref{B1-a1} for $L=ijk$; the coefficient of $\eta_{ijk}$ is $t.v_{\emptyset,1}-4v_{\emptyset,1}=0$. Hence, by \eqref{tecres} we deduce $v_{\emptyset,1}=0$.\\
Moreover, let us consider \eqref{a0-B0} for $L=ijk$; the coefficient of $\eta_{ijk}$ is $-t.v_{\emptyset,0}+4v_{\emptyset,0}=0$. Hence by \eqref{tecres2} we deduce $v_{\emptyset,0}=0$.\\
Finally, let us consider \eqref{-a2+B2} for $L=ijk$; the coefficient of $\eta_{ijk}$ is $t.v_{\emptyset,2}-4v_{\emptyset,2}=0$. Hence  by \eqref{tecres3} we deduce $v_{\emptyset,2}=0$.\\
So far we have shown that, for all $i \in \left\{1,2,3,4,5,6\right\}$, $v_{\emptyset,0}=v_{\emptyset,1}=v_{\emptyset,2}=v_{i,1}=v_{i,2}=0$.\\
Let us now show that $v_{jl,1}=1$ for all $j l \in \mathcal{I}_{<}$.
The coefficient of $\eta_{l}$ in \eqref{B0+b1} for $L=j$ is $-\eta_{l} \otimes v_{jl,1}+\eta_{l} \otimes \xi_{lj} v_{\emptyset,0}=0.$ Therefore $v_{jl,1}=0$.

We know by \eqref{32ck6} that $bd_{2}=-4Bd_{1}+3ad_{1}$, since $Cd_{0}(\emptyset) =0$.
Using this relation we have that Equations \eqref{11ck6}, \eqref{21ck6}, \eqref{27ck6} reduce to:
\begin{align*}
C_{3}+4B_{4}+6a_{4}=2C_{3}+2(B_{4}-a_{4})-4iad_{1}+4iBd_{1}=C_{3}-6iad_{1}+6iBd_{1}=0.
\end{align*}
We consider the following linear combinations of the previous equations:
\begin{align*}
3B_{4}+7a_{4}+2iad_{1}-2iBd_{1}=B_{4}-a_{4}+4iad_{1}-4iBd_{1}=0.
\end{align*}
Since $ad_{1}(\emptyset)$ and $Bd_{1}(\emptyset)$ involve only terms in $\eta_{*}$ with $v_{\emptyset,1}$ that is 0, we obtain $a_{4}(\emptyset)=0$. Therefore
\begin{align*}
\sum_{I}\eta_{I}\otimes v_{I,4}=0.
\end{align*}
Using linear independence of distinct $\eta_{I}$'s, we get $v_{I,4}=0$ for all $I \in \mathcal{I}_{<}$.
Now Equations \eqref{12ck6}, \eqref{26ck6}, \eqref{31ck6} reduce to:
\begin{align*}
C_{2}+3a_{3}+3B_{3}=4C_{2}+3B_{3}-3a_{3}-3iad_{0}-3ibd_{1}=C_{2}   -6iBd_{0}  +3i ad_{0} -3ibd_{1}=0.
\end{align*}
We observe that $ad_{0}(\emptyset)$ and $Bd_{0}(\emptyset)$ involve only terms with $v_{\emptyset, 0}$ that is 0, $bd_{1}(\emptyset)$ involves only terms with $v_{\emptyset, 1},v_{I,1}$ where $|I|=1,2$, that are zero. Then these equations reduce to:
\begin{align*}
C_{2}+3a_{3}+3B_{3}=4C_{2}+3B_{3}-3a_{3}=C_{2}=0.
\end{align*}
Therefore $a_{3}(\emptyset)=0$. As before we deduce $v_{I,3}=0$ for all $I \in \mathcal{I}_{<}$.\\
Thus we have shown that, for a singular vector $\vec{m}$, $T(\vec{m})$ has the following form:
\begin{align*}
T(\vec{m})= \Theta^{2} \sum_{|I|\geq 2}\eta_{I} \otimes v_{I,2}+\Theta \sum_{|I|\geq 3}\eta_{I} \otimes v_{I,1}+ \sum_{|I|\geq 1}\eta_{I} \otimes v_{I,0}.
\end{align*}
This means that there are singular vectors $\vec{m}$ of at most degree 8 and, in particular, $T(\vec{m})$ has the following form:
\begin{align*}
&T(\vec{m})=\Theta^{2}\sum_{|I|=2}\eta_{I} \otimes v_{I,2} \quad \text{degree} \,\, 8,\\
&T(\vec{m})=\Theta^{2}\sum_{|I|=3}\eta_{I} \otimes v_{I,2} \quad  \text{degree} \quad 7,\\
&T(\vec{m})=\Theta^{2}\sum_{|I|=4}\eta_{I} \otimes v_{I,2} \quad  \text{degree} \,\, 6,\\
&T(\vec{m})=\Theta^{2}\sum_{|I|=5}\eta_{I} \otimes v_{I,2} +\Theta \sum_{|I|=3}\eta_{I} \otimes v_{I,1}+\sum_{|I|=1}\eta_{I} \otimes v_{I,0} \quad  \text{degree} \,\, 5,\\
&T(\vec{m})=\Theta^{2}\sum_{|I|=6}\eta_{I} \otimes v_{I,2} +\Theta \sum_{|I|=4}\eta_{I} \otimes v_{I,1}+\sum_{|I|=2}\eta_{I} \otimes v_{I,0} \quad  \text{degree} \,\, 4,\\
&T(\vec{m})=\Theta \sum_{|I|=5}\eta_{I} \otimes v_{I,1}+\sum_{|I|=3}\eta_{I} \otimes v_{I,0} \quad  \text{degree} \,\, 3,\\
&T(\vec{m})=\Theta \sum_{|I|=6}\eta_{I} \otimes v_{I,1}+\sum_{|I|=4}\eta_{I} \otimes v_{I,0} \quad  \text{degree} \,\, 2,\\
&T(\vec{m})= \sum_{|I|=5}\eta_{I} \otimes v_{I,0} \quad  \text{degree} \,\, 1.
\end{align*}
If we look respectively at vectors of degree $8$, $7$ and $6$, we can use relation $B_{1}(j)+a_{1}(j)+2b_{2}(j)=0$ from condition \textbf{S2} for $L=j$. In both these three cases it reduces to $b_{2}(j)=0$ since there are no $v_{I,1}$'s involved.
We get that:
\begin{align*}
b_{2}(j)=\sum_{I}\sgn_{I}\partial_{j}\eta_{I} \otimes v_{I,2} 
\end{align*}
where $\sgn_{I}=\pm 1$ and is not needed explicitly here, for $|I|=2,3,4$ respectively.
By linear independence we get $v_{I,2}=0$ for $ |I|=2,3,4$, $I \in \mathcal{I}_{<}$.
\end{proof}
\begin{ack}
	The author would like to thank Nicoletta Cantarini, Fabrizio Caselli and Victor Kac for useful comments and suggestions.
	\end{ack}


\begin{thebibliography}{-}
	\addcontentsline{toc}{chapter}{Bibliography}
	\bibitem{bagnoli} Bagnoli L., Caselli F. \textit{Classification of finite irreducible conformal modules for $K'_4$}, arXiv:2103.16374.
	\bibitem{kac1} Boyallian C., Kac V.G., Liberati, J. \textit{Irreducible modules over finite simple Lie conformal superalgebras of type K}, J. Math. Phys. 51 (2010), 1-37.
	\bibitem{ck6} Boyallian C., Kac V. G., Liberati J. \textit{Classification of finite irreducible modules over the Lie conformal superalgebra $CK_{6}$}, Comm. Math. Phys. 317 (2013), 503-546.
	\bibitem{bklr} Boyallian C., Kac  V. G., Liberati J., Rudakov A. \textit{Representations of simple finite Lie conformal superalgebras of type W and S}, J. Math. Phys. 47 (2006), 1-25.
	\bibitem{cantacasellikac} Cantarini N., Caselli F., Kac, V.G. \textit{Lie conformal superalgebras and duality of modules over linearly compact Lie superalgebras}, Adv. Math. 378 (2021), 107523.
	\bibitem{chengcantakac} Cheng S., Cantarini N., Kac V. G., \textit{Errata to Structure of Some $\Z$-graded Lie Superalgebras of Vector Fields}, Transf. Groups 9 (2004), 399-400.
	\bibitem{chengkac} Cheng S., Kac, V. G. \textit{Conformal modules}, Asian J. Math. 1, 181 (1997); 2, 153(E) (1998).
	\bibitem{new6} Cheng S., Kac V.G. \textit{A new $N = 6$ superconformal algebra}, Comm. Math. Phys. 186 (1997), 219-231.
	\bibitem{chengkac2} Cheng S., Kac, V. G. \textit{Structure of some $\Z$-graded Lie superalgebras of vector fields}, Transf. Groups, 4 (1999), 219-272.
	\bibitem{chenglam} Cheng, S., Lam, N. \textit{Finite conformal modules over N=2,3,4 superconformal algebras}, J. Math. Phys. 42 (2001), 906-933.
	\bibitem{dandrea} D'Andrea A, Kac V. G. \textit{Structure theory of finite conformal algebras}, Selecta Math., (N.S.) 4 (1998) 377-418.
	\bibitem{fattorikac} Fattori D., Kac V. G. \textit{Classification of finite simple Lie conformal superalgebras}, J. Algebra 258, (2002) 23-59., Special issue in celebration of Claudio Procesi's 60th birthday.
	\bibitem{kac1vertex} Kac V.G. \textit{Vertex algebras for beginners}, Univ. Lecture Ser., Vol. 10, AMS, Providence, RI, 1996, 2nd ed., (1998).
	\bibitem{kac98} Kac V.G. \textit{Classification of Infinite-Dimensional Simple Linearly Compact Lie Superalgebras}, Advanced in Mathematics 139 (1998), 11-55 
	\bibitem{kacrudakov} Kac V. G., Rudakov A. \textit{Representations of the exceptional Lie superalgebra E(3,6). I. Degeneracy conditions}, Transform. Groups 7, (2002) 67-86. 
	\bibitem{zm} Mart\'inez C., Zelmanov E., \textit{Irreducible representations of the exceptional Cheng-Kac superalgebra}, Trans. Amer. Math. Soc. 366 (2014), 5853-5876.
	\bibitem{Shche} Shchepochkina, I. \textit{The five exceptional simple Lie superalgebras of vector fields}, Funktsional Anal. i Prilozhen 33(3), 59-72, 96 (2000). transl. in Funct. Anal. Appl. 33 (1999), 3 208-219.
\end{thebibliography}
\end{document}